\newtheorem{theorem}{Theorem}[section]
\newtheorem{lemma}[theorem]{Lemma}
\newtheorem{assumption}{Assumption}
\newtheorem{definition}{Definition}
\numberwithin{equation}{section}
\begin{document}

\title[an inverse random source problem]{An inverse random source
problem for the time fractional diffusion equation driven by a fractional
Brownian motion}

\author{Xiaoli Feng}
\address{School of Mathematics and Statistics, Xidian University, Xi'an, 713200,
P. R. China}
\email{xiaolifeng@xidian.edu.cn}

\author{Peijun Li}
\address{Department of Mathematics, Purdue University, West Lafayette, Indiana 47907, USA}
\email{lipeijun@math.purdue.edu}

\author{Xu Wang}
\address{Department of Mathematics, Purdue University, West Lafayette, Indiana 47907, USA}
\email{wang4191@purdue.edu}

\thanks{The research is supported in by part the NSF grant DMS-1912704.}

\subjclass[2010]{35R30, 35R60, 65M32}

\keywords{fractional diffusion equation, inverse source problem, fractional
Brownian motion, uniqueness, ill-posedness}

\begin{abstract}
This paper is concerned with the mathematical analysis of the inverse
random source problem for the time fractional diffusion equation, where the
source is assumed to be driven by a fractional Brownian motion. Given the
random source, the direct problem is to study the stochastic time fractional
diffusion equation. The inverse problem is to determine the statistical
properties of the source from the expectation and variance of the final time
data. For the direct problem, we show that it is well-posed and has a unique
mild solution under a certain condition. For the inverse problem, the uniqueness
is proved and the instability is characterized. The major ingredients of the
analysis are based on the properties of the Mittag--Leffler function and the
stochastic integrals associated with the fractional Brownian motion.
\end{abstract}

\maketitle

\section{Introduction}

In the last two decades, the fractional derivative equations (FDEs) have
received ever-increasing attention by many researchers due to their potential 
applications in modeling real physical phenomena. For examples, the FDE can be
used to describe the anomalous diffusion in a highly heterogeneous aquifer
\cite{Adams+1992}, the relaxation phenomena in complex viscoelastic materials
\cite{Ginoa+1992}, the anomalous diffusion in an underground environmental
problem \cite{Hatano+1998}, and a non-Markovian diffusion process with memory
\cite{Metzler+2000}. We refer to \cite{Gorenflo09} for some recent advances in
theory and simulation of the fractional diffusion processes.

Motivated by significant scientific and industrial applications, the field of
inverse problems has undergone a tremendous growth in the last several decades
since Calder\'{o}n proposed an inverse conductivity problem. Recently, the
inverse problems on FDEs have also progressed into an area of intense research
activity. In particular, for the time or time-space fractional diffusion
equations, the inverse source problems have been widely  investigated
mathematically and numerically. Compared with the semilinear problem
\cite{Luchko+2013}, many more results are available for the linear problems. The
linear inverse source problems for fractional diffusion equations can be broadly
classified into the following six cases: (1) determining a space-dependent
source term from the space-dependent data
\cite{Aziz+2016,Furati+2014,Kirane+2011,Kirane+2013,Sakamoto+2011+IS,
Tatar+Tina+2015,Tatar+2015,Tuan+2017,Wang+2013,Wang+Yamamoto+Han+2013,Wei+2014,
Yamamoto+2012,Yang+2017}; (2) determining a time-dependent source term
from the time-dependent data
\cite{Ismailov+2016,Li+Wei+2018,Liu+Zhang+2017,Sakamoto+Yamamoto+2011,
Wei+Zhang+2013}; (3) determining a time-dependent source term from the
space-dependent data \cite{Aleroev+2013,Jia+Peng+2017}; (4) determining a
space-dependent source term from the time-dependent data \cite{Zhang+Xu+2011};
(5) determining a space-dependent source term from the boundary data
\cite{Wei+Sun+2016}; (6) determining a general source from the
time-dependent data \cite{Murio+Mejia+2008}. Despite a considerable amount of
work done so far, the rigorous mathematical theory is still lacking
\cite{Jin+2015}, especially for the inverse problems where the sources contain
uncertainties, which are known as the inverse random source problems.

The inverse random source problems belong to a category of stochastic inverse
problems, which refer to inverse problems that involve uncertainties. Compared
to deterministic inverse problems, stochastic inverse problems have
substantially more difficulties on top of the existing obstacles due to the
randomness and uncertainties. There are some work done for the inverse random
source scattering problems, where the wave propagation is governed by the
stochastic Helmholtz equation driven by the white noise. In
\cite{Devaney+1979}, it was shown that the correlation of the random source
could be determined uniquely by the correlation of the random wave field.
Recently, an effective computational model was developed in
\cite{Bao+Chen+Li+2016,Bao+Chen+Li+2017,Bao+Chow+Li+Zhou+2010,
Bao+Chow+Li+Zhou+2014,Li+Chen+Li+2017,Li+2011,Li+Yuan+2017}, the goal was to
reconstruct the statistical properties of the random source such as
the mean and variance from the boundary measurement of the radiated random
wave field at multiple frequencies.

The work is very rare for the inverse random source problems of the
fractional diffusion equations. In \cite{Niu+2018}, the authors presented a
study on the random source problem for the fractional diffusion equation.
Specifically, they considered the following initial-boundary value problem:
\begin{equation}\label{fBmIsource}
\left\{
\begin{array}{ll}
\partial_{t}^{\alpha}u(x,t)-\Delta u(x,t)=f(x)h(t)+g(x)\dot{W}(t), &
(x,t)\in{D}\times(0,T),\\
u(x,t)=0, & (x,t)\in{\partial D}\times[0,T], \\
u(x,0)=0, & x\in\overline{{D}},
\end{array}
\right.
\end{equation}
where $D$ is a bounded domain with the Lipschitz boundary $\partial D$,
$f$ and $g$ are deterministic functions with compact supports contained in
$D$, $h$ is also a deterministic function, $W$ and $\dot{W}$ are the
Brownian motion and the white noise, respectively, and
$\partial_{t}^{\alpha}u(x,t), 0<\alpha\leq 1$ is the Caputo fractional
derivative given by
\[
\partial_{t}^{\alpha}u(x,t)=
\begin{cases}
\displaystyle  \frac{1}{\Gamma(1-\alpha)}\int_0^t\frac{\partial u(x,s)}{\partial
s}\frac{ds}{(t-s)^{\alpha}},& \quad 0<\alpha<1,\\
\partial_t u(t, x), &\quad \alpha=1.
\end{cases}
\]
Here $\Gamma(\alpha)=\int_0^\infty e^{-s}s^{\alpha-1}ds$ is the Gamma function.
For the model problem \eqref{fBmIsource}, the authors studied the inverse
problem of reconstructing $f(x)$ and $|g(x)|$ from the statistics of the final
time data $u(x,T)$ with $\frac{1}{2}<\alpha<1$. It was shown that $f$ and $|g|$
can be uniquely determined by the expectation and covariance of the final
data, respectively. Besides, they also showed that the inverse problem is not
stable in the sense that a small variance of the data may lead to a huge error
of the reconstruction. Naturally, one may ask the following two questions: 
\begin{itemize}
  \item []Q1. Can the results be extended to  $0<\alpha<1$ for the Brownian
motion?
  \item []Q2. Can the results be extended to the fractional Brownian motion?
\end{itemize}

Motivated by above reasons, the main purpose of this paper is to study the
inverse source problem for the time fractional diffusion equation, where the
source is assumed to be driven by a more general stochastic process:
the fractional Brownian motion $B^H(t)$ with $0<\alpha\leq 1$, $0<H<1$, where
$H$
is called the Hurst index of the fractional Brownian motion. Clearly, the
model equation (\ref{fBmIsource}) is reduced to the classical heat conduction
equation with the Brownian motion for $\alpha=1$. In this work, we give
confirmative answers to Q1 and Q2. For Q1, due to the singular integral (see
Lemma 3.4 in \cite{Niu+2018} or the proof later in this paper), the results can
not be extended; for Q2, the results can be extended as long as $\alpha+H>1$.
For the restriction $\alpha+H>1$, it is not difficult to understand since both
$H$ and $\alpha$ imply some smoothness requirement of the solution for
the model equation.

The rest of this paper is organized as follows. In Section 2, we introduce some
preliminaries for the time-fractional diffusion equations and the
Mittag--Leffler function. Section 3 is
concerned with the well-posedness of the direct problem. Section 4 is devoted
to the inverse problem. The two cases $0<H<\frac12$ and $\frac12<H<1$ are 
discussed separately for both of the direct and inverse problems. The paper is
concluded with some general remarks and directions for future research in
Section 5. To make the paper easily accessible, some necessary notations and
useful results are provided in Appendix on the fractional Brownian motion.

\section{Preliminaries}

Let the triple $(\Omega, \mathcal F, P)$ be a complete probability space on
which the fractional Brownian motion $B^H$ is defined (see Appendix for
the details). Here $\Omega$ is a sample space, $\mathcal F$ is a
$\sigma$-algebra on $\Omega$, and $P$ is a probability measure on the measurable
space $(\Omega, \mathcal F)$. If $X$ is a random variable, $\mathbb{E}(X)$ and
$\mathbb{V}(X)=\mathbb{E}(X-\mathbb{E}(X))^2 =
\mathbb{E}(X^2)-(\mathbb{E}(X))^2$ are the expectation and variance of $X$,
respectively. If $X, Y$ are two random
variables, $\text{Cov}(X,Y)=\mathbb{E}[(X-\mathbb{E}(X))(Y-\mathbb{E}(Y))]$
denotes the covariance of $X$ and $Y$.

Consider initial-boundary value problem of the fractional diffusion equation
with a random source driven by the fractional Brownian motion
\begin{equation}\label{fde}
\left\{
\begin{array}{ll}
\partial_{t}^{\alpha}u(x,t)-\Delta u(x,t)=f(x)h(t)+g(x)\dot{B}^H(t), &
(x,t)\in{D}\times(0,T),\\
u(x,t)=0, & (x,t)\in{\partial D}\times[0,T], \\
u(x,0)=0, & x\in\overline{{D}}.
\end{array}
\right.
\end{equation}
Let $\{\lambda_k, \varphi_k\}_{k=1}^{\infty}$ be the eigensystem of the operator
$-\Delta$ with the homogeneous Dirichlet boundary condition in $D$. It is known
that the eigenvalues satisfy
$0<\lambda_1\leq\lambda_2\leq\cdots\leq\lambda_k\leq\cdots,
\lambda_k\to\infty, k\to\infty$ and the
eigen-functions $\{\varphi_k\}_{k=1}^{\infty}$ form a complete and orthogonal
basis in $L^2({D})$. It follows from the separation of variables that the
solution of \eqref{fde} can be written as
\[
 u(x,t,\omega)=\sum_{k=1}^{\infty}u_k(t,\omega)\varphi_k(x),
\]
where $\omega\in\Omega$ and $u_k(t,\omega)$ satisfies the stochastic fractional
differential equation
 \begin{equation}\label{SODE}
\left\{
\begin{array}{ll}
D_t^{\alpha}u_k(t,\omega)+ \lambda_k
u_k(t,\omega)=f_kh(t)+g_k\dot{B}^H(t), &  t\in(0,T),\\
u_k(0)=0.
\end{array}
\right.
\end{equation}
Here $f_k=(f,\varphi_k)_{L^2({D})}$ and
$g_k=(g,\varphi_k)_{L^2({D})}$. When
$g_k=0, k\in\mathbb N$, the corresponding deterministic fractional differential
equation is
\begin{equation*}
\left\{
\begin{array}{ll}
D_t^{\alpha}u_k(t)+ \lambda_k
u_k(t)=f_kh(t), &  t\in(0,T),\\
u_k(0)=0,
\end{array}
\right.
\end{equation*}
whose solution can be obtained directly by applying the following Lemma. The
proof can be found in \cite[Page 230]{Kilbas+2006} or \cite[Example
4.3]{Podlubny+1999}.

\begin{lemma}\label{fODEsolution}
Consider the Cauchy problem for the fractional differential equation:
 \begin{equation}\label{fODE}
\begin{cases}
D_t^{\alpha}v(t) - \lambda v(t)=f(t), &  t\in(0,T),\\
\frac{d^{n}v}{d t^{n}}(0)=v_n, & n=0,\ldots,\lfloor\alpha\rfloor.
\end{cases}
\end{equation}
If $f(t)\in C^{0, \gamma}$ with $0\leq\gamma\leq\alpha$, then the
Cauchy problem (\ref{fODE}) has a unique solution given by
\begin{equation*}
v(t)=\sum_{n=0}^{\lfloor\alpha\rfloor}v_nt^nE_{\alpha,n+1}(\lambda
t^{\alpha})+\int_0^t(t-\tau)^{\alpha-1}E_{\alpha,\alpha}(\lambda(t-\tau)^{\alpha
})f(\tau)d\tau,
\end{equation*}
where $E_{\alpha,\beta}$ is the Mittag--Leffler function (see (\ref{MLf})).
\end{lemma}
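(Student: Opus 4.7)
The plan is to move to the frequency domain via the Laplace transform, recognise the resulting expressions as transforms of Mittag--Leffler functions, and finally verify that the candidate formula is a classical solution that coincides with any other solution.

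First, I would apply the Laplace transform $\mathcal{L}$ to both sides of \eqref{fODE}. Writing $V=\mathcal{L}\{v\}$ and $F=\mathcal{L}\{f\}$ and using the standard identity for the Caputo derivative,
\[
\mathcal{L}\{D_t^{\alpha}v\}(s)=s^{\alpha}V(s)-\sum_{n=0}^{\lfloor\alpha\rfloor}s^{\alpha-n-1}v_n,
\]
the equation \eqref{fODE} becomes the algebraic identity
\[
V(s)=\sum_{n=0}^{\lfloor\alpha\rfloor}\frac{s^{\alpha-n-1}}{s^{\alpha}-\lambda}\,v_n+\frac{F(s)}{s^{\alpha}-\lambda}.
\]

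Second, I would invert termwise using the classical transform pair
\[
\mathcal{L}\{t^{\beta-1}E_{\alpha,\beta}(\lambda t^{\alpha})\}(s)=\frac{s^{\alpha-\beta}}{s^{\alpha}-\lambda},\qquad \mathrm{Re}(s)>|\lambda|^{1/\alpha}.
\]
Choosing $\beta=n+1$ handles the initial-data contributions and produces $t^{n}E_{\alpha,n+1}(\lambda t^{\alpha})$, while the convolution theorem applied to the remaining piece produces
\[
\int_0^t(t-\tau)^{\alpha-1}E_{\alpha,\alpha}(\lambda(t-\tau)^{\alpha})f(\tau)\,d\tau,
\]
which is exactly the formula claimed in the lemma.

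Third, I would verify that this candidate is in fact a classical solution and is unique. Termwise differentiation of the Mittag--Leffler series shows that the initial-data contributions reproduce the prescribed values $v^{(n)}(0)=v_n$ and solve the homogeneous equation. The Hölder hypothesis $f\in C^{0,\gamma}$ with $\gamma\leq\alpha$ is precisely what is needed to handle the weakly singular factor $(t-\tau)^{\alpha-1}$ when applying $D_t^{\alpha}$ to the convolution and to recover $f(t)$ pointwise; this fractional Duhamel verification is the main technical obstacle of the proof, since naive differentiation under the integral fails near $\tau=t$ and one must either split off $f(\tau)-f(t)$ and use the Hölder estimate on the difference or argue directly from the transform identity on a suitable function space. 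Uniqueness then follows by subtracting two solutions: the difference satisfies the homogeneous problem with zero initial data, so its Laplace transform vanishes identically and hence so does the difference itself.
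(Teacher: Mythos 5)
Your Laplace-transform derivation is correct and is essentially the argument behind the paper's proof, which is given only by citation to \cite[Page 230]{Kilbas+2006} and \cite[Example 4.3]{Podlubny+1999}; those references proceed exactly as you do, via $\mathcal{L}\{D_t^{\alpha}v\}(s)=s^{\alpha}V(s)-\sum_n s^{\alpha-n-1}v_n$, the transform pair for $t^{\beta-1}E_{\alpha,\beta}(\lambda t^{\alpha})$, and the convolution theorem. Your identification of the H\"older hypothesis as the ingredient needed to justify applying $D_t^{\alpha}$ to the weakly singular Duhamel term is also the right technical point.
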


By Lemma \ref{fODEsolution}, we can obtain a mild solution of
(\ref{SODE}), which gives a mild solution to the initial-boundary value
problem of the stochastic fractional diffusion equation (\ref{fBmIsource}). Let
us first give some assumptions in order to understand the solution.

\begin{assumption}\label{assumption}
Assume that $f,g\in L^2({D})$ such that $g\neq0$ and $h\in L^{\infty}(0,T)$
is positive and bounded from below, i.e., there exists $c_h>0$ such that $h\geq
c_h$.
\end{assumption}

\begin{definition}
A stochastic process $u: D\times [0,T]\rightarrow L^2({D})$ defined
by
\begin{equation}\label{Dsolution}
u(x,t,\omega)=\sum_{k=1}^{\infty}(I_{k,1}(t)+I_{k,2}(t,\omega))\varphi_k(x),
\end{equation}
is called a mild solution of the initial-boundary value problem of the
stochastic fractional diffusion equation (\ref{fBmIsource}), where
\begin{align}\label{Ik1}
I_{k,1}(t)&=f_k\int_0^t(t-\tau)^{\alpha-1}E_{\alpha,\alpha}(-\lambda_k(t-\tau)^{
\alpha})h(\tau)d\tau,\\
\label{Ik2}
I_{k,2}(t,\omega)&=g_k\int_0^t(t-\tau)^{\alpha-1}E_{\alpha,\alpha}
(-\lambda_k(t-\tau)^{\alpha})dB^H(\tau).
\end{align}
\end{definition}

Since the Mittag--Leffler function is very important for the analysis, let us
state some of its properties. The two-parametric Mittag--Leffler
function is defined as
\begin{equation}\label{MLf}
E_{\alpha,\beta}(z)=\sum_{k=0}^{\infty}\frac{z^k}{\Gamma(k\alpha+\beta)}, \quad
z\in \mathbb{C},
\end{equation}
where $\alpha, \beta\in\mathbb R$. Obviously, $E_{1,1}(z)=e^{z}$. More
information about the Mittag--Leffler
function can be found in \cite{MLfunction}.

\begin{lemma}\cite[Theorem 1.6]{Podlubny+1999}\label{MLinq}
If $0<\alpha<2$, $\beta$ is an arbitrary real number, $\mu$ is such that
$\pi\alpha/2<\mu<\min\{\pi,\pi\alpha\}$, then there exists a positive constant
$C$ such that 
\begin{equation*}
|E_{\alpha,\beta}(z)|\leq\frac{C}{1+|z|},\quad\mu\leq|arg(z)|\leq\pi,\quad
|z|\geq 0.
\end{equation*}
\end{lemma}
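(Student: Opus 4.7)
The plan is to split the sector $\mu\le|\arg z|\le\pi$ according to $|z|\le R$ and $|z|\ge R$, and handle each separately. On $|z|\le R$ the entire function $E_{\alpha,\beta}$ is continuous, hence bounded, which already gives a trivial estimate of the form $M/(1+|z|)$. The content of the lemma lies in establishing $|E_{\alpha,\beta}(z)|\le C_1/|z|$ for $|z|\ge R$ uniformly in the sector; gluing the two bounds by taking the larger constant then yields the stated estimate $C/(1+|z|)$.

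For the large-$|z|$ estimate, I would derive the Wiman-type asymptotic expansion
\[
E_{\alpha,\beta}(z)=-\sum_{k=1}^{N}\frac{z^{-k}}{\Gamma(\beta-\alpha k)}+O(|z|^{-N-1}),\qquad |z|\to\infty,
\]
uniformly in the sector $\mu\le|\arg z|\le\pi$, and then specialize to $N=1$. The derivation starts from the contour representation
\[
E_{\alpha,\beta}(z)=\frac{1}{2\pi i}\int_{\gamma(\varepsilon,\phi)}\frac{t^{\alpha-\beta}\,e^{t}}{t^{\alpha}-z}\,dt,
\]
obtained by substituting the Hankel integral $1/\Gamma(w)=\tfrac{1}{2\pi i}\int_{\mathscr{H}}e^{t}t^{-w}\,dt$ into the power series for $E_{\alpha,\beta}$ and interchanging summation with integration. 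Here $\gamma(\varepsilon,\phi)$ is the Hankel contour consisting of two rays $\arg t=\pm\phi$ joined by the arc $|t|=\varepsilon$ around the origin. Expanding $1/(t^{\alpha}-z)$ as a geometric series in $t^{\alpha}/z$ and integrating term by term recovers the finite sum (each power is evaluated by the Hankel formula) together with an explicit remainder integral with prefactor $z^{-N}$.

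The hard part will be the uniform control of this remainder. The angle $\phi$ must be chosen so that two constraints hold simultaneously: (i) $e^{t}$ decays along the two rays, which requires $\cos\phi<0$ and hence $\phi>\pi/2$; and (ii) the image of the contour under $t\mapsto t^{\alpha}$ avoids the sector $\mu\le|\arg z|\le\pi$ containing $z$, which requires $\alpha\phi<\mu$. These constraints are jointly feasible precisely when $\pi\alpha/2<\mu$, and the additional stipulation $\mu<\min\{\pi,\pi\alpha\}$ provides the room needed to keep the contour within the principal branch of $t^{\alpha}$ and bounded away from the pole locus. Under such a choice, a direct length-times-sup estimate on $\gamma(\varepsilon,\phi)$, using $|t^{\alpha}-z|\ge c|z|$ for $|t|$ small and $|t^{\alpha}-z|\ge c|t^{\alpha}|$ for $|t|$ large, gives a uniform bound of order $|z|^{-N-1}$ on the remainder. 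Setting $N=1$ yields the desired $|E_{\alpha,\beta}(z)|\le C_1/|z|$ for $|z|\ge R$, and the proof concludes by the gluing argument above.
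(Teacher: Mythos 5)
The paper offers no proof of this lemma; it is quoted verbatim from Podlubny's book (Theorem 1.6 there), whose proof is precisely the argument you sketch: the Hankel-contour representation of $E_{\alpha,\beta}$, the finite geometric expansion of $(t^{\alpha}-z)^{-1}$ yielding the Wiman asymptotic series with $N=1$, and the choice of contour angle $\phi$ with $\pi/2<\phi$ and $\alpha\phi<\mu$ (feasible exactly because $\mu>\pi\alpha/2$) to keep the pole locus away from the sector. Your proposal is correct and takes essentially the same route as the cited source; the only detail a full writeup would need to add is that the contour representation, first derived where the geometric series in $zt^{-\alpha}$ converges, extends to the whole region exterior to the contour by analytic continuation before the large-$|z|$ estimate is applied.
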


\begin{lemma}\cite[Lemma 3.2]{Sakamoto+Yamamoto+2011}\label{dEalf}
For $\lambda>0, \alpha>0$, we have
\begin{equation*}
\frac{d}{dt}E_{\alpha,1}(-\lambda t^{\alpha})=-\lambda
t^{\alpha-1}E_{\alpha,\alpha}(-\lambda t^{\alpha}),\quad t>0.
\end{equation*}
\end{lemma}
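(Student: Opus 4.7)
The plan is to prove the identity by differentiating the defining power series of $E_{\alpha,1}(-\lambda t^{\alpha})$ term by term and then re-indexing to recognize another Mittag--Leffler function on the right-hand side.

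First I would write out the definition from \eqref{MLf}:
\begin{equation*}
E_{\alpha,1}(-\lambda t^{\alpha})=\sum_{k=0}^{\infty}\frac{(-\lambda)^k t^{k\alpha}}{\Gamma(k\alpha+1)}.
\end{equation*}
Because $E_{\alpha,1}$ is entire (the coefficients $1/\Gamma(k\alpha+1)$ decay super-geometrically), the series and its formal term-by-term derivative both converge uniformly on compact subsets of $t>0$. This justifies interchanging $\frac{d}{dt}$ with the summation; I would note this briefly as the only analytic subtlety in the argument.

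Next, differentiating term by term, the $k=0$ term vanishes and for $k\geq 1$ one has
\begin{equation*}
\frac{d}{dt}\frac{(-\lambda)^k t^{k\alpha}}{\Gamma(k\alpha+1)}=\frac{(-\lambda)^k\,k\alpha\, t^{k\alpha-1}}{\Gamma(k\alpha+1)}=\frac{(-\lambda)^k t^{k\alpha-1}}{\Gamma(k\alpha)},
\end{equation*}
where I used the functional equation $\Gamma(k\alpha+1)=k\alpha\,\Gamma(k\alpha)$. The $k=0$ term was the only constant contribution, and after reindexing by $j=k-1$ the sum becomes
\begin{equation*}
\sum_{k=1}^{\infty}\frac{(-\lambda)^k t^{k\alpha-1}}{\Gamma(k\alpha)}=\sum_{j=0}^{\infty}\frac{(-\lambda)^{j+1} t^{(j+1)\alpha-1}}{\Gamma((j+1)\alpha)}=-\lambda t^{\alpha-1}\sum_{j=0}^{\infty}\frac{(-\lambda t^{\alpha})^{j}}{\Gamma(j\alpha+\alpha)}.
\end{equation*}
Recognizing the last sum as $E_{\alpha,\alpha}(-\lambda t^{\alpha})$ by the definition \eqref{MLf} gives the claimed identity.

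I do not anticipate a hard step: the entire proof is a power-series manipulation, and the only point requiring care is the justification of termwise differentiation, which follows from the entireness of the Mittag--Leffler function. One minor bookkeeping item is the factor $t^{\alpha-1}$, which forces the restriction $t>0$ when $0<\alpha<1$; this matches the hypothesis in the statement.
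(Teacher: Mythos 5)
Your proposal is correct, and the computation (termwise differentiation of the defining series, the identity $\Gamma(k\alpha+1)=k\alpha\,\Gamma(k\alpha)$, and the reindexing) is exactly the standard argument for this identity. The paper itself supplies no proof --- it simply cites \cite[Lemma 3.2]{Sakamoto+Yamamoto+2011}, where the result is established by the same power-series manipulation you give --- so there is nothing further to compare.
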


\begin{lemma}\label{dML2} For $\lambda, z\in \mathbb{C}$, we have
\begin{equation*}
\frac{d}{dz}(z^{\alpha-1}E_{\alpha,\alpha}(-\lambda
z^{\alpha}))=z^{\alpha-2}E_{\alpha,\alpha-1}(-\lambda z^{\alpha}).
\end{equation*}
\end{lemma}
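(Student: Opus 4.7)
The identity is a pure power-series manipulation, and my plan is to prove it by inserting the defining series (\ref{MLf}) of $E_{\alpha,\alpha}$ and differentiating term by term. First I would write
\[
z^{\alpha-1}E_{\alpha,\alpha}(-\lambda z^{\alpha})=\sum_{k=0}^{\infty}\frac{(-\lambda)^{k}}{\Gamma(k\alpha+\alpha)}\,z^{(k+1)\alpha-1},
\]
and note that, as $E_{\alpha,\alpha}$ is entire, this series converges absolutely and uniformly on compact subsets of any sector avoiding the branch cut of $z^{\alpha}$, which justifies termwise differentiation.

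Differentiating each monomial then gives
\[
\frac{d}{dz}\bigl(z^{\alpha-1}E_{\alpha,\alpha}(-\lambda z^{\alpha})\bigr)=\sum_{k=0}^{\infty}\frac{(-\lambda)^{k}\bigl((k+1)\alpha-1\bigr)}{\Gamma(k\alpha+\alpha)}\,z^{(k+1)\alpha-2}.
\]
The key algebraic step is to apply the Gamma-function recurrence $\Gamma(s+1)=s\,\Gamma(s)$ with $s=(k+1)\alpha-1$, so that $\Gamma(k\alpha+\alpha)=\bigl((k+1)\alpha-1\bigr)\Gamma(k\alpha+\alpha-1)$. The awkward factor $(k+1)\alpha-1$ then cancels against the numerator.

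After the cancellation, I would factor $z^{\alpha-2}$ out of every term and recognise the remaining sum as a Mittag--Leffler series with second parameter $\alpha-1$:
\[
\sum_{k=0}^{\infty}\frac{(-\lambda)^{k}}{\Gamma(k\alpha+\alpha-1)}\,z^{(k+1)\alpha-2}=z^{\alpha-2}\sum_{k=0}^{\infty}\frac{(-\lambda z^{\alpha})^{k}}{\Gamma(k\alpha+\alpha-1)}=z^{\alpha-2}E_{\alpha,\alpha-1}(-\lambda z^{\alpha}),
\]
which is precisely the claimed right-hand side. The only point that genuinely requires care is the justification of termwise differentiation, because the exponents $(k+1)\alpha-1$ are generally non-integer: one should restrict to a compact subset bounded away from the origin in a fixed sector and rely on the analyticity of $z\mapsto z^{\alpha}$ there, rather than on the elementary real-variable rule for power series. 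Once that framework is set up, the remainder is bookkeeping with the Gamma recurrence.
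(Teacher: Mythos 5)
Your proof is correct, but it takes a different route from the paper. The paper disposes of this lemma in one line by citing formula (4.3.1) of the Mittag--Leffler monograph \cite{MLfunction}, namely $\frac{d}{dz}(z^{\alpha-1}E_{\alpha,\alpha}(z^{\alpha}))=z^{\alpha-2}E_{\alpha,\alpha-1}(z^{\alpha})$, and then invoking the chain rule to absorb the factor $-\lambda$. You instead derive the identity from scratch by inserting the defining series \eqref{MLf}, differentiating termwise, and using the Gamma recurrence $\Gamma(s+1)=s\Gamma(s)$ with $s=(k+1)\alpha-1$ to shift the second parameter from $\alpha$ to $\alpha-1$; the algebra is right, and your cancellation of the factor $(k+1)\alpha-1$ is exactly the mechanism behind the cited formula. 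Your version is more self-contained and arguably cleaner on one point: the paper's ``chain rule'' step implicitly requires rescaling $z$ by a complex $\alpha$-th root of $-\lambda$, which your series argument sidesteps entirely by carrying $-\lambda$ through from the start. Your caution about termwise differentiation of a series with non-integer exponents (restricting to a sector where $z^{\alpha}$ is analytic) is appropriate. One small point you could make explicit: when $(k+1)\alpha-1$ vanishes (e.g.\ $k=0$, $\alpha=1$), the corresponding term of the differentiated series is zero, which matches the convention $1/\Gamma(0)=0$ since $1/\Gamma$ is entire, so the resulting series is still the correct $E_{\alpha,\alpha-1}$ series term by term.
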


\begin{proof}
By \cite[formula (4.3.1)]{MLfunction}
\begin{equation*}
\frac{d}{dz}(z^{\alpha-1}E_{\alpha,\alpha}(
z^{\alpha}))=z^{\alpha-2}E_{\alpha,\alpha-1}(z^{\alpha}),
\end{equation*}
which completes the proof after using the chain rule.
\end{proof}

\begin{lemma}\label{lm:tE}
For any $0<s<t, \lambda_k>0$, there exists some constant $C$ such that
\begin{align*}
|t^{\alpha-1}E_{\alpha,\alpha}(-\lambda_kt^{\alpha})-s^{\alpha-1}E_{\alpha,
\alpha}(-\lambda_ks^{\alpha})|
\le C\int_s^t\frac{r^{\alpha-2}}{1+\lambda_kr^{\alpha}}dr.
\end{align*}
\end{lemma}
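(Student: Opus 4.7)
The plan is to express the difference as an integral via the fundamental theorem of calculus and then estimate the integrand pointwise using the Mittag--Leffler bound. Specifically, I would write
\begin{equation*}
t^{\alpha-1}E_{\alpha,\alpha}(-\lambda_k t^{\alpha})-s^{\alpha-1}E_{\alpha,\alpha}(-\lambda_k s^{\alpha})
=\int_s^t \frac{d}{dr}\bigl[r^{\alpha-1}E_{\alpha,\alpha}(-\lambda_k r^{\alpha})\bigr]\,dr,
\end{equation*}
which is legitimate because $r\mapsto r^{\alpha-1}E_{\alpha,\alpha}(-\lambda_k r^{\alpha})$ is smooth on $(0,\infty)$. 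Then Lemma \ref{dML2} (applied with $z=r$) identifies the integrand as $r^{\alpha-2}E_{\alpha,\alpha-1}(-\lambda_k r^{\alpha})$.

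The next step is to bound $|E_{\alpha,\alpha-1}(-\lambda_k r^{\alpha})|$ using Lemma \ref{MLinq}. Here the argument is $z=-\lambda_k r^\alpha$, and since $\lambda_k>0$ and $r>0$, we have $\arg(z)=\pi$, which certainly satisfies $\mu\leq |\arg(z)|\leq \pi$ for any admissible $\mu\in(\pi\alpha/2,\min\{\pi,\pi\alpha\})$. Since $0<\alpha<2$ and $\beta=\alpha-1$ is real, Lemma \ref{MLinq} applies and gives
\begin{equation*}
|E_{\alpha,\alpha-1}(-\lambda_k r^{\alpha})|\leq \frac{C}{1+\lambda_k r^{\alpha}}.
\end{equation*}

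Combining the two steps, taking absolute values through the integral yields
\begin{equation*}
\bigl|t^{\alpha-1}E_{\alpha,\alpha}(-\lambda_k t^{\alpha})-s^{\alpha-1}E_{\alpha,\alpha}(-\lambda_k s^{\alpha})\bigr|
\leq \int_s^t r^{\alpha-2}\,|E_{\alpha,\alpha-1}(-\lambda_k r^{\alpha})|\,dr
\leq C\int_s^t \frac{r^{\alpha-2}}{1+\lambda_k r^{\alpha}}\,dr,
\end{equation*}
which is the desired estimate.

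I do not anticipate a serious obstacle: the argument is essentially a one-line fundamental theorem of calculus plus a direct application of two already-stated lemmas. The only point requiring mild care is verifying that the hypothesis on $\arg(z)$ in Lemma \ref{MLinq} is met, and that $\beta=\alpha-1$ is allowed (it is, since the lemma permits arbitrary real $\beta$). Note also that the right-hand side is finite for $r\in[s,t]$ with $s>0$ even when $\alpha<1$ makes $r^{\alpha-2}$ singular at $0$, so strict positivity of $s$ is essential; this matches the hypothesis $0<s<t$.
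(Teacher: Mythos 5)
Your argument is correct and is essentially identical to the paper's proof: both write the difference as $\int_s^t \frac{d}{dr}[r^{\alpha-1}E_{\alpha,\alpha}(-\lambda_k r^\alpha)]\,dr$, invoke Lemma \ref{dML2} to identify the integrand as $r^{\alpha-2}E_{\alpha,\alpha-1}(-\lambda_k r^\alpha)$, and then bound it via Lemma \ref{MLinq}. Your version is in fact slightly more careful than the paper's, which omits the absolute value when passing the estimate inside the integral.
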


\begin{proof}
By Lemmas \ref{dML2} and \ref{MLinq}, we have
\[
\frac
d{dr}[r^{\alpha-1}E_{\alpha,\alpha}(-\lambda_kr^{\alpha})]=r^{\alpha-2}E_{\alpha
,\alpha-1}(-\lambda_kr^{\alpha})
\]
and
\[
|E_{\alpha,\alpha-1}(-\lambda_kr^{\alpha})|\le\frac{C}{1+\lambda_kr^{\alpha}}.
\]
A simple calculation yields that
\begin{align*}
|t^{\alpha-1}E_{\alpha,\alpha}(-\lambda_kt^{\alpha})-s^{\alpha-1}E_{\alpha,
\alpha}(-\lambda_ks^{\alpha})|
=&\int_s^tr^{\alpha-2}E_{\alpha,\alpha-1}(-\lambda_kr^{\alpha})dr\\
\le&C\int_s^t\frac{r^{\alpha-2}}{1+\lambda_kr^{\alpha}}dr,
\end{align*}
which completes the proof.
\end{proof}

\begin{lemma}\cite{Pollard+1984}\label{c.m.}
For $x\geq 0, 0\leq\alpha\leq 1$, the function $E_{\alpha,1}$ is completely
monotonic, i.e.,
\begin{equation*}
(-1)^n\frac{d^nE_{\alpha,1}(-x)}{dx^n}\geq0,\quad n=0,1,2,\cdots.
\end{equation*}
\end{lemma}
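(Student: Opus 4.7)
The plan is to prove complete monotonicity by exhibiting $E_{\alpha,1}(-x)$ as the Laplace transform of a nonnegative function on $[0,\infty)$ and then invoking Bernstein's theorem. Once I have
\[
E_{\alpha,1}(-x)=\int_0^\infty e^{-xs}\phi_\alpha(s)\,ds,\qquad \phi_\alpha(s)\ge 0,
\]
differentiating $n$ times under the integral sign yields
\[
(-1)^n\frac{d^n}{dx^n}E_{\alpha,1}(-x)=\int_0^\infty s^n e^{-xs}\phi_\alpha(s)\,ds\ge 0,
\]
which is exactly the claim. Note that a naive termwise approach on the defining series $E_{\alpha,1}(-x)=\sum_{k\ge 0}(-x)^k/\Gamma(k\alpha+1)$ fails for $0<\alpha<1$, because the individual monomials $(-x)^k$ are not each completely monotonic; the Laplace-transform reformulation is essential.

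The two endpoints are immediate: $\alpha=1$ gives $E_{1,1}(-x)=e^{-x}$, and $\alpha=0$ gives $E_{0,1}(-x)=\sum_{k\ge 0}(-x)^k=(1+x)^{-1}=\int_0^\infty e^{-(1+x)s}\,ds$, which corresponds to $\phi_0(s)=e^{-s}$. For $0<\alpha<1$ I would derive the representation from the Hankel contour formula
\[
E_{\alpha,1}(-x)=\frac{1}{2\pi i}\int_{\mathcal H}\frac{\zeta^{\alpha-1}e^\zeta}{\zeta^\alpha+x}\,d\zeta,
\]
where $\mathcal H$ wraps the branch cut on $(-\infty,0)$. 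Inserting the Laplace identity $(\zeta^\alpha+x)^{-1}=\int_0^\infty e^{-(\zeta^\alpha+x)s}\,ds$, exchanging the order of integration by Fubini, and collapsing $\mathcal H$ onto the cut via $\zeta=re^{\pm i\pi}$ produces $E_{\alpha,1}(-x)=\int_0^\infty e^{-xs}\phi_\alpha(s)\,ds$ with an explicit kernel that coincides, up to normalization, with the probability density of a positive $\alpha$-stable random variable (characterized by the Laplace transform $e^{-\lambda^\alpha}$).

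The main obstacle is verifying that $\phi_\alpha\ge 0$. The Hankel-contour derivation presents $\phi_\alpha$ as an oscillatory integral whose sign is not manifest. The cleanest resolution is to match Laplace transforms on both sides and identify $\phi_\alpha$ with the one-sided $\alpha$-stable density, after which nonnegativity follows from probability theory. A purely analytic alternative is to compute the Mellin transform of the candidate $\phi_\alpha$, recognize it as a ratio of Gamma functions, and invert via a Barnes-type contour argument to obtain a manifestly nonnegative representation; this is heavier but self-contained. Once $\phi_\alpha\ge 0$ is in hand, Bernstein's theorem together with the differentiation under the integral sign above completes the proof.
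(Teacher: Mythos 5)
The paper offers no proof of this lemma: it is quoted verbatim from Pollard's 1948 note, and your proposal is essentially a reconstruction of Pollard's own argument --- represent $E_{\alpha,1}(-x)$ as the Laplace transform of a nonnegative kernel (a rescaling of the one-sided $\alpha$-stable density) and invoke Bernstein's theorem --- so in that sense you are reproving the cited result by the route its author took. The strategy is sound, and you correctly isolate the crux (nonnegativity of $\phi_\alpha$) and the correct resolution (identification with the stable law, e.g.\ via $\mathbb{E}[S_\alpha^{-\alpha k}]=k!/\Gamma(\alpha k+1)$). One step does fail as literally written: after collapsing the Hankel contour onto the cut, $\zeta=re^{\pm i\pi}$, one has $\Re(\zeta^{\alpha}+x)=x+r^{\alpha}\cos(\pi\alpha)$, which is negative for large $r$ whenever $\alpha>\tfrac12$, so the inserted identity $(\zeta^{\alpha}+x)^{-1}=\int_0^\infty e^{-(\zeta^{\alpha}+x)s}\,ds$ diverges there and the Fubini exchange is not justified on that contour. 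The standard repair is to keep the Hankel rays at an angle $\theta$ with $\tfrac{\pi}{2}<\theta<\tfrac{\pi}{2\alpha}$ --- a nonempty range precisely because $\alpha<1$, and legitimate since $1/(\zeta^{\alpha}+x)$ has no poles on the principal branch for $x>0$ --- so that $e^{\zeta}$ decays while $\Re\zeta^{\alpha}>0$, perform the exchange there, and only afterwards recognize the inner $\zeta$-integral as the Hankel representation of the stable density. With that adjustment, and with the final identification of $\phi_\alpha$ actually carried out rather than announced, your outline becomes a complete proof of the cited statement.
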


By Lemmas \ref{dEalf} and \ref{c.m.}, we have the following property
of $E_{\alpha,\alpha}$.

\begin{lemma}\label{Epositive}
For $0<\alpha\leq 1, x\geq0$, there holds $E_{\alpha,\alpha}(-x)\geq0$ and
$x^{\alpha-1}E_{\alpha,\alpha}(-\lambda x^{\alpha})$ is monotonically
decreasing.
\end{lemma}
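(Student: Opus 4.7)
The plan is to exploit the identity in Lemma \ref{dEalf} to reduce both assertions to the complete monotonicity of $\phi(y):=E_{\alpha,1}(-y)$ supplied by Lemma \ref{c.m.}. Setting $\lambda=1$ and rearranging the identity gives
\[
t^{\alpha-1}E_{\alpha,\alpha}(-t^\alpha)=-\frac{d}{dt}E_{\alpha,1}(-t^\alpha),\quad t>0,
\]
and the analogous identity with general $\lambda>0$ reads
\[
x^{\alpha-1}E_{\alpha,\alpha}(-\lambda x^\alpha)=-\frac{1}{\lambda}\frac{d}{dx}E_{\alpha,1}(-\lambda x^\alpha).
\]
These two representations will be the engine of the proof: they convert statements about $E_{\alpha,\alpha}$ into statements about derivatives of $\phi$ composed with $y=\lambda x^\alpha$.

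For the non-negativity claim, I would apply the chain rule to the right-hand side of the first identity to rewrite it as $-\alpha t^{\alpha-1}\phi'(t^\alpha)$. Lemma \ref{c.m.} with $n=1$ gives $\phi'\le 0$, so the product is non-negative and $E_{\alpha,\alpha}(-t^\alpha)\ge 0$ for every $t>0$; the remaining case $t=0$ is immediate from $E_{\alpha,\alpha}(0)=1/\Gamma(\alpha)>0$. Substituting $x=t^\alpha$ then delivers $E_{\alpha,\alpha}(-x)\ge 0$ for all $x\ge 0$.

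For the monotonicity claim, it suffices to show that $\psi(x):=E_{\alpha,1}(-\lambda x^\alpha)=\phi(\lambda x^\alpha)$ is convex on $x>0$, since the second identity presents the target function as $-\psi'(x)/\lambda$. Two applications of the chain rule yield
\[
\psi''(x)=(\lambda\alpha x^{\alpha-1})^2\,\phi''(\lambda x^\alpha)+\lambda\alpha(\alpha-1)x^{\alpha-2}\,\phi'(\lambda x^\alpha),
\]
and invoking Lemma \ref{c.m.} with $n=1,2$ provides $\phi'\le 0$ and $\phi''\ge 0$. The assumption $0<\alpha\le 1$ enters here to ensure $\alpha-1\le 0$, which makes the second summand non-negative; the first summand is manifestly non-negative, so $\psi''\ge 0$ and $-\psi'/\lambda$ is non-increasing on $(0,\infty)$.

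The main obstacle is little more than careful sign-and-chain-rule bookkeeping; once the two identities above are written down, the entire argument reduces to reading off the signs of $\phi'$ and $\phi''$ from complete monotonicity. It is worth noting that $\alpha\le 1$ is used precisely to pin down the sign of the cross term in $\psi''$, so that the boundary case $\alpha=1$ is covered and the proof does not secretly require $\alpha<1$.
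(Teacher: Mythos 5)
Your argument is correct and follows exactly the route the paper indicates: it derives both claims from Lemma \ref{dEalf} combined with the complete monotonicity of $E_{\alpha,1}$ from Lemma \ref{c.m.}, which is precisely the paper's (one-line) justification. Your chain-rule bookkeeping for the sign of $\psi''$ and the observation that $0<\alpha\le 1$ is what makes the cross term non-negative correctly fill in the details the paper omits.
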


\section{The direct problem}

In this section, we discuss the well-posedness of the direct problem. We show
that the mild solution \eqref{Dsolution} is well-defined for the
initial-boundary value problem of the stochastic fractional diffusion equation
(\ref{fBmIsource}).

It is easy to note that the mild solution (\ref{Dsolution}) satisfies
\begin{align*}
  \|u(\cdot,t)\|^2_{L^2({D})}
&=\left\|\sum_{k=1}^{\infty}(I_{k,1}(t)+I_{k,2}(t,\omega))\varphi_k(\cdot)
\right\|_{L^2({D})}^2 \\
&=\sum_{k=1}^{\infty}(I_{k,1}(t)+I_{k,2}(t,\omega))^2\leq 2
\left(\sum_{k=1}^{\infty}
I^2_{k,1}(t)+\sum_{k=1}^{\infty}I_{k,2}^2(t,\omega)\right).
\end{align*}
Hence,
\begin{align}\label{ex}
\mathbb{E}\left[\|u\|^2_{L^2({D}\times[0,T])}\right] & =
\mathbb{E}\left[\int_0^T \|u(\cdot,t)\|^2_{L^2({D})}dt\right]\nonumber\\
&\lesssim\mathbb{E}\left[\int_0^T
\left(\sum_{k=1}^{\infty}I^2_{k,1}(t)+\sum_{k=1}^{\infty}I_{k,2}^2(t,
\omega)\right)dt\right]\nonumber\\
&=\int_0^T
\left(\sum_{k=1}^{\infty}I^2_{k,1}(t)\right)dt+\mathbb{E}\left[\int_0^T
\left(\sum_{k=1}^{\infty}I_{k,2}^2(t,\omega)\right)dt\right]\nonumber\\
&=\sum_{k=1}^{\infty}\|I_{k,1}\|^2_{L^2(0,T)}+\int_0^T
\left(\sum_{k=1}^{\infty}\mathbb{E}\left[I_{k,2}^2(t,\omega)\right]
\right)dt\nonumber\\
&=:S_1+S_2.
\end{align}
Hereinafter $a\lesssim b$ stands for $a\leqslant Cb$, where $C>0$ is a
constant.

We shall discuss the sums $S_1$ and $S_2$ separately. First, let us
consider the sum $S_1$. Set
$G_{\alpha,k}(t)=t^{\alpha-1}E_{\alpha,\alpha}(-\lambda_kt^{\alpha})$. By
(\ref{Ik1}), it is easy to see that
$I_{k,1}(t)=f_k\left(G_{\alpha,k}*h\right)(t)$. Using the Young convolution
inequality yields
\begin{equation}\label{Ik11}
\|I_{k,1}\|_{L^2(0,T)}\leq |f_k|\|G_{\alpha,k}\|_{L^1(0,T)}\|h\|_{L^2(0,T)}.
\end{equation}
It follows from Lemma \ref{MLinq} that
\begin{equation}\label{G}
\|G_{\alpha,k}\|_{L^1(0,T)}=\int_0^T|t^{\alpha-1}E_{\alpha,\alpha}(-\lambda_kt^{
\alpha})|dt\lesssim\int_0^Tt^{\alpha-1}dt=\frac{T}{\alpha}.
\end{equation}
Combining (\ref{ex})--(\ref{G}), we obtain
\begin{equation}\label{S1}
S_1\leq\frac{T^2}{\alpha^2}\sum_{k=1}^{\infty}|f_k|^2\|h\|^2_{L^2(0,T)}
\lesssim\|h\|^2_{L^2(0,T)}\|f\|^2_{L^2({D})}.
\end{equation}

Next, we estimate the sum $S_2$. By (\ref{Ik2}), we know that
\begin{align}\label{exIk2}
\mathbb{E}\left[I_{k,2}^2(t,\omega)\right]&=\mathbb{E}\left[
g_k^2\left(\int_0^t(t-\tau)^{\alpha-1}E_{\alpha,\alpha}(-\lambda_k(t-\tau)^{
\alpha})dB^H(\tau)\right)^2\right]\nonumber\\
&=g_k^2
\mathbb{E}\left[\left(\int_0^t(t-\tau)^{\alpha-1}E_{\alpha,\alpha}
(-\lambda_k(t-\tau)^{\alpha})dB^H(\tau)\right)^2\right].
\end{align}
The case $H=\frac{1}{2}, \alpha\in(\frac12,1)$ has been considered in
\cite{Niu+2018}. We investigate more general $\alpha\in (0,1)$, and
discuss the cases $H\in(0,\frac{1}{2})$ and $H\in(\frac{1}{2},1)$,
respectively, since the stochastic integrals are different.

\subsection{The case $H\in(0,\frac{1}{2})$}\label{subsec3.1}

It follows from Appendix on the fractional Brownian motion $B^H$ that the
stochastic integral in (\ref{exIk2}) with respect to $B^H$ satisfies
\begin{align}\label{eq:moment}
&\mathbb{E}\left|\int_0^t(t-\tau)^{\alpha-1}E_{\alpha,\alpha}
(-\lambda_k(t-\tau)^{\alpha})dB^H(\tau)\right|^2\nonumber\\
=&\int_0^t\left[K_{H,t}^*\left((t-\cdot)^{\alpha-1}E_{\alpha,\alpha}
(-\lambda_k(t-\cdot)^{\alpha})\right)\right]^2(\tau)d\tau\nonumber\\
=&\int_0^t\Bigg[K_H(t,\tau)(t-\tau)^{\alpha-1}E_{\alpha,\alpha}
(-\lambda_k(t-\tau)^{\alpha})\nonumber\\
&+\int_{\tau}^t\left[(t-u)^{\alpha-1}E_{\alpha,\alpha}(-\lambda_k(t-u)^{\alpha}
)-(t-\tau)^{\alpha-1}E_{\alpha,\alpha}(-\lambda_k(t-\tau)^{\alpha})\right]\frac{
\partial K_H(u,\tau)}{\partial u}du\Bigg]^2d\tau\nonumber\\
\lesssim&\int_0^t\left[\left(\frac
{t}{\tau}\right)^{H-\frac12}(t-\tau)^{\alpha+H-\frac32}E_{\alpha,\alpha}
(-\lambda_k(t-\tau)^{\alpha})\right]^2d\tau\nonumber\\
&+\int_0^t{\tau}^{1-2H}\left[\left(\int_\tau^tu^{H-\frac32}(u-\tau)^{H-\frac12}
du\right)(t-\tau)^{\alpha-1}E_{\alpha,\alpha}(-\lambda_k(t-\tau)^{\alpha})\right
]^2d\tau\nonumber\\
&+\int_0^t\left[\int_\tau^t\left[(t-u)^{\alpha-1}E_{\alpha,\alpha}
(-\lambda_k(t-u)^{\alpha})-(t-\tau)^{\alpha-1}E_{\alpha,\alpha}
(-\lambda_k(t-\tau)^{\alpha})\right]\frac{\partial K_H(u,\tau)}{\partial
u}du\right]^2d\tau\nonumber\\
=&:I_1(t)+I_2(t)+I_3(t),
\end{align}
where $K_H(u,\tau)$ is given by (\ref{KH012}). Below we estimate $I_j(t), j=1,
2, 3.$

The estimate of $I_1(t)$. By Lemma \ref{MLinq}, there holds
\begin{align}\label{eq:It}
I_1(t)=&\int_0^t\left(\frac
{t}{\tau}\right)^{2H-1}(t-\tau)^{2\alpha+2H-3}E_{\alpha,\alpha}
^2(-\lambda_k(t-\tau)^{\alpha})d\tau\nonumber\\
\lesssim&t^{2H-1}\int_0^t\tau^{1-2H}(t-\tau)^{2\alpha+2H-3}d\tau\nonumber\\
\leq&\int_0^t(t-\tau)^{2\alpha+2H-3}d\tau
=\frac{t^{2\alpha+2H-2}}{2\alpha+2H-2},
\end{align}
where we have used the conditions $0<H<\frac12, \alpha+H>1$ for the
singular integral and the mean value theorem for the definite integral.

The estimate of $I_2(t)$. Using Lemma \ref{MLinq}, we have
\begin{align}\label{eq:IIt}
I_2(t)=&\int_0^t{\tau}^{1-2H}\left(\int_\tau^tu^{H-\frac32}(u-\tau)^{H-\frac12}
du\right)^2(t-\tau)^{2\alpha-2}E_{\alpha,\alpha}^2(-\lambda_k(t-\tau)^{\alpha}
)d\tau\nonumber\\
\lesssim&\int_0^t{\tau}^{1-2H}\left(\int_\tau^tu^{H-\frac32}(u-\tau)^{H-\frac12}
du\right)^2(t-\tau)^{2\alpha-2}d\tau.
\end{align}
Since $H>0$, the integral $\int_\tau^tu^{H-\frac32}(u-\tau)^{H-\frac12}du$ is
well-defined. Furthermore, we have from the binomial expansion that
\begin{align*}
&\int_\tau^tu^{H-\frac32}(u-\tau)^{H-\frac12}du= \int_\tau^t
u^{2H-2}(1-\frac{\tau}{u})^{H-\frac12}du \\
=&\int_\tau^t u^{2H-2}\left[\sum_{n=0}^{\infty}\left(
\begin{array}{c}
H-\frac12\\
n
\end{array}\right)(-\frac{\tau}{u})^n\right]du \\
=&\sum_{n=0}^{\infty}\left(
\begin{array}{c}
H-\frac12\\
n
\end{array}\right)(-1)^n\tau^n\int_\tau^t u^{2H-2-n}du\\
=&\sum_{n=0}^{\infty}\left(
\begin{array}{c}
H-\frac12\\
n
\end{array}\right)(-1)^n\tau^n\frac{t^{2H-1-n}-\tau^{2H-1-n}}{2H-1-n}\\
=&t^{2H-1}\sum_{n=0}^{\infty}\left(
\begin{array}{c}
H-\frac12\\
n
\end{array}\right)\frac{(-1)^n}{2H-1-n}\left(\frac{\tau}{t}\right)^n-\tau^{2H-1}
\sum_{n=0}^{\infty}\left(
\begin{array}{c}
H-\frac12\\
n
\end{array}\right)\frac{(-1)^n}{2H-1-n}\\
\leq&(t^{2H-1}-\tau^{2H-1})\sum_{n=0}^{\infty}\left(
\begin{array}{c}
H-\frac12\\
n
\end{array}\right)\frac{(-1)^n}{2H-1-n}.
\end{align*}
It is easy to note from the asymptotic expansion for the binomial
coefficients that
\[
\sum_{n=0}^{\infty}\left(
\begin{array}{c}
H-\frac12\\
n
\end{array}\right)\frac{(-1)^n}{2H-1-n}=A<\infty.
\]
Therefore, (\ref{eq:IIt}) becomes
\begin{align*}
I_2(t)\lesssim&\int_0^t{\tau}^{1-2H}\left(t^{4H-2}+\tau^{4H-2}\right)(t-\tau)^{
2\alpha-2}d\tau\\
=&t^{4H-2}\int_0^t{\tau}^{1-2H}(t-\tau)^{2\alpha-2}d\tau+\int_0^t{\tau}^{2H-1}
(t-\tau)^{2\alpha-2}d\tau\\
=&t^{4H-2}t^{1-2H}\int_0^t(t-\tau)^{2\alpha-2}d\tau+\int_0^t{\tau}^{2H-1}t^{
2\alpha-2}(1-\frac{\tau}{t})^{2\alpha-2}d\tau\\
=&\frac{t^{2H+2\alpha-2}}{2\alpha-1}+t^{2\alpha-2}\int_0^t{\tau}^{2H-1}\left[
\sum_{n=0}^{\infty}\left(
\begin{array}{c}
2\alpha-2\\
n
\end{array}\right)\left(-\frac{\tau}{t}\right)^n\right]d\tau\\
\lesssim&t^{2H+2\alpha-2}+t^{2\alpha-2}\sum_{n=0}^{\infty}
\left[\left(\begin{array}{c}
2\alpha-2\\
n
\end{array}\right)(-1)^nt^{-n}\int_0^t{\tau}^{2H-1+n}d\tau\right]\\
=&t^{2H+2\alpha-2}+t^{2H+2\alpha-2}\sum_{n=0}^{\infty}\left(\begin{array}{c}
2\alpha-2\\
n
\end{array}\right)\frac{(-1)^n}{2H+n},
\end{align*}
where we have used the conditions $0<H<\frac12, \frac12<\alpha<1$.
Since $2\alpha-2<0$, we have from the asymptotic expansion for the
binomial coefficients again that
\[
0<\sum_{n=0}^{\infty}\left(
\begin{array}{c}
2\alpha-2\\
n
\end{array}\right)\frac{(-1)^n}{2H+n}=B<\infty.
\]
Hence
\begin{equation}\label{eq:IIt3}
I_2(t)\lesssim t^{2H+2\alpha-2}.\\
\end{equation}

The estimate of $I_3(t)$. Based on Lemma \ref{lm:tE}, for $0<\tau<u<t$, there
holds
\begin{align*}
&|(t-u)^{\alpha-1}E_{\alpha,\alpha}(-\lambda_k(t-u)^{\alpha})-(t-\tau)^{\alpha-1
}E_{\alpha,\alpha}(-\lambda_k(t-\tau)^{\alpha})|\\
\lesssim &\int_{t-u}^{t-\tau}r^{\alpha-2}dr
\lesssim (t-u)^{\alpha-\frac{3}{2}}(u-\tau)^{\frac{1}{2}},
\end{align*}
where we have used the fact that $x^{\frac{1}{2}}$ is $\frac12$-H\"{o}lder
continuous for $x>0$. A simple calculation yields that
\begin{align*}
I_3(t)\lesssim\int_0^t\left[\int_\tau^t(t-u)^{\alpha-\frac{3}{2}}(u-\tau)^{H-1}
\left(\frac{u}{\tau}\right)^{H-\frac12}du\right]^2d\tau.
\end{align*}
The above integral is convergent due to the conditions $H>0, \alpha>\frac12$.
Since $H\in(0,\frac12)$, we have $\left(\frac{u}{\tau}\right)^{H-\frac12}<1$ for
$0<\tau<u<t$. Hence
\begin{align}\label{eq:IIIt}
I_3(t)\lesssim&\int_0^t\left[\int_0^{t-\tau}(t-\tau-r)^{\alpha-\frac{3}{2}}r^{H-
1 }dr\right]^2d\tau\nonumber\\
=&\int_0^t\left[(t-\tau)^{\alpha-\frac{3}{2}}\sum_{n=0}^{\infty}\left(
\begin{array}{c}
\alpha-\frac{3}{2}\\
n
\end{array}\right)(-1)^n(t-\tau)^{-n}\int_0^{t-\tau}r^{n+H-1}dr\right]
^2d\tau\nonumber\\
=&\left[\sum_{n=0}^{\infty}\left(
\begin{array}{c}
\alpha-\frac32\\
n
\end{array}\right)\frac{(-1)^n}{n+H}\right]^2\int_0^t(t-\tau)^{2\alpha+2H-3}
d\tau \lesssim t^{2\alpha+2H-2},
\end{align}
where we have used the condition $\alpha+H>1$.

Combining (\ref{eq:moment})--(\ref{eq:It}) and (\ref{eq:IIt3})--(\ref{eq:IIIt}),
we obtain for $H\in(0,\frac12)$ that
\begin{align}\label{eq:EeH012}
\mathbb{E}\left|\int_0^t(t-\tau)^{\alpha-1}E_{\alpha,\alpha}(-\lambda_k(t-\tau)^
{\alpha})dB^H(\tau)\right|^2\lesssim t^{2\alpha+2H-2}.
\end{align}

\subsection{The case $H\in(\frac{1}{2},1)$}\label{subsec3.2}

It follows from Appendix again that the stochastic integral in (\ref{exIk2})
with respect to $B^H$ satisfies
\begin{align*}
&\mathbb{E}\left[\left(\int_0^t(t-\tau)^{\alpha-1}E_{\alpha,\alpha}
(-\lambda_k(t-\tau)^{\alpha})dB^H(\tau)\right)^2\right]\\
=&
\alpha_H\int_0^t\int_0^t(t-p)^{\alpha-1}E_{\alpha,\alpha}(-\lambda_k(t-p)^{
\alpha})(t-q)^{\alpha-1}E_{\alpha,\alpha}(-\lambda_k(t-q)^{\alpha})|p-q|^{2H-2}
dpdq
\end{align*}
By Lemma \ref{MLinq}, we have
\begin{align*}
&\mathbb{E}\left[\left(\int_0^t(t-\tau)^{\alpha-1}E_{\alpha,\alpha}
(-\lambda_k(t-\tau)^{\alpha})dB^H(\tau)\right)^2\right]\\
\lesssim&\alpha_H\int_0^t\int_0^t(t-p)^{\alpha-1}(t-q)^{\alpha-1}|p-q|^{2H-2}
dpdq\\
=&\alpha_H\int_0^t\int_0^t(t-p)^{\alpha-1}(t-q)^{\alpha-1}|(t-q)-(t-p)|^{2H-2}
dpdq.
\end{align*}
Let $t-p=\tilde{p}, t-q=\tilde{q}$. A simple calculation gives
\begin{align*}
&\int_0^t\int_0^t(t-p)^{\alpha-1}(t-q)^{\alpha-1}|(t-q)-(t-p)|^{2H-2}
dpdq\\
=&\int_0^t\int_0^t\tilde{p}^{\alpha-1}\tilde{q}^{\alpha-1}|\tilde{q}-\tilde{p}|^
{2H-2}d\tilde{p}d\tilde{q}\\
=&\int_0^t\int_0^{\tilde{q}}\tilde{p}^{\alpha-1}\tilde{q}^{\alpha-1}|\tilde{q}
-\tilde{p}|^{2H-2}d\tilde{p}d\tilde{q}+\int_0^t\int_{\tilde{q}}^t\tilde{p}^{
\alpha-1}\tilde{q}^{\alpha-1}|\tilde{q}-\tilde{p}|^{2H-2}d\tilde{p}d\tilde{q}\\
=&2\int_0^t\int_{\tilde{q}}^t\tilde{p}^{\alpha-1}\tilde{q}^{\alpha-1}(\tilde{p}
-\tilde{q})^{2H-2}d\tilde{p}d\tilde{q}\\
=&2\int_0^t\int_{\tilde{q}}^t\tilde{p}^{\alpha+2H-3}\tilde{q}^{\alpha-1}(1-\frac
{\tilde{q}}{\tilde{p}})^{2H-2}d\tilde{p}d\tilde{q}.
\end{align*}
Since $|\frac{\tilde{q}}{\tilde{p}}|<1$, we have from the binomial expansion
that
\begin{align*}
&\int_0^t\int_{\tilde{q}}^t\tilde{p}^{\alpha+2H-3}\tilde{q}^{\alpha-1}(1-\frac{
\tilde{q}}{\tilde{p}})^{2H-2}d\tilde{p}d\tilde{q}\\
=&\int_0^t\int_{\tilde{q}}^t\tilde{p}^{\alpha+2H-3}\tilde{q}^{\alpha-1}
\left(\sum_{n=0}^{\infty}\left(\begin{array}{c}
2H-2\\
n
\end{array}\right)(-\frac{\tilde{q}}{\tilde{p}})^n\right)d\tilde{p}d\tilde{q}\\
=&\int_0^t\tilde{q}^{\alpha-1}\sum_{n=0}^{\infty}\left(\begin{array}{c}
2H-2\\
n
\end{array}\right)(-1)^n\tilde{q}^n\left(\int_{\tilde{q}}^t\tilde{p}^{
\alpha+2H-3-n}d\tilde{p}\right)d\tilde{q}.
\end{align*}

Note that when $n=0$, $\alpha+2H-3-n=-1$ is possible, but when $n\geq1$,
$\alpha+2H-3-n=-1$ is impossible. Therefore we discuss the above
integral in two cases.

Case I: $2H-2=-\alpha$. It follows from the straightforward calculations that
\begin{align*}
&\int_0^t\tilde{q}^{\alpha-1}\sum_{n=0}^{\infty}\left(\begin{array}{c}
2H-2\\
n
\end{array}\right)(-1)^n\tilde{q}^n\left(\int_{\tilde{q}}^t\tilde{p}^{
\alpha+2H-3-n}d\tilde{p}\right)d\tilde{q}\\
=&\int_0^t\tilde{q}^{\alpha-1}\sum_{n=0}^{\infty}\left(\begin{array}{c}
2H-2\\
n
\end{array}\right)(-1)^n\tilde{q}^n\left(\int_{\tilde{q}}^t\tilde{p}^{-1-n}
d\tilde{p}\right)d\tilde{q}\\
=&\int_0^t\left[\tilde{q}^{\alpha-1}\int_{\tilde{q}}^t\tilde{p}^{-1}d\tilde{p}
+\tilde{q}^{\alpha-1}\sum_{n=1}^{\infty}\left(\begin{array}{c}
2H-2\\
n
\end{array}\right)(-1)^n\tilde{q}^n\left(\int_{\tilde{q}}^t\tilde{p}^{-1-n}
d\tilde{p}\right)\right]d\tilde{q}\\
=&\int_0^t\tilde{q}^{\alpha-1}(\ln t-\ln
\tilde{q})d\tilde{q}+\int_0^t\tilde{q}^{\alpha-1}\sum_{n=1}^{\infty}\left(\begin
{array}{c}
2H-2\\
n
\end{array}\right)(-1)^n\tilde{q}^n\frac{\tilde{q}^{-n}-t^{-n}}{n}d\tilde{q}\\
=&\ln
t\int_0^t\tilde{q}^{\alpha-1}d\tilde{q}-\lim_{\epsilon\rightarrow0+}\int_{
\epsilon}^t\tilde{q}^{\alpha-1}\ln\tilde{q}d\tilde{q}+\sum_{n=1}^{\infty}
\left(\begin{array}{c}
2H-2\\
n
\end{array}\right)\frac{(-1)^n}{n}\left(\int_0^t\tilde{q}^{\alpha-1}d\tilde{q}
-t^{-n}\int_0^t\tilde{q}^{\alpha+n-1}d\tilde{q}\right)\\
=&\frac{t^{\alpha}}{\alpha}\ln
t-\lim_{\epsilon\rightarrow0+}\left(\frac{t^{\alpha}}{\alpha}\ln
t-\frac{\epsilon^{\alpha}}{\alpha}\ln
\epsilon-\frac{t^{\alpha}-\epsilon^{\alpha}}{\alpha^2}\right)+\sum_{n=1}^{\infty
}\left(\begin{array}{c}
2H-2\\
n
\end{array}\right)\frac{(-1)^n}{n}\frac{nt^{\alpha}}{\alpha(\alpha+n)}\\
=&\frac{t^{\alpha}}{\alpha}\frac{1}{\alpha}+\frac{t^{\alpha}}{\alpha}\sum_{n=1}^
{\infty}\left(\begin{array}{c}
2H-2\\
n
\end{array}\right)(-1)^n\frac{1}{\alpha+n}\\
=&\frac{t^{\alpha}}{\alpha}\sum_{n=0}^{\infty}\left(\begin{array}{c}
2H-2\\
n
\end{array}\right)(-1)^n\frac{1}{\alpha+n},
\end{align*}
where the integration by parts and L'H\^{o}pital's rule are used. Moreover,
the condition $\alpha>0$ can guarantee the convergence of the singular
integrals.

Case II: $2H-2\neq-\alpha$. Similarly, we have from straightforward calculations
that
\begin{align*}
&\int_0^t\tilde{q}^{\alpha-1}\sum_{n=0}^{\infty}\left(\begin{array}{c}
2H-2\\
n
\end{array}\right)(-1)^n\tilde{q}^n\left(\int_{\tilde{q}}^t\tilde{p}^{
\alpha+2H-3-n}d\tilde{p}\right)d\tilde{q}\\
=&\sum_{n=0}^{\infty}\left(\begin{array}{c}
2H-2\\
n
\end{array}\right)(-1)^n\frac{1}{\alpha+2H-2-n}\left(t^{\alpha+2H-2-n}
\int_0^t\tilde{q}^{n+\alpha-1}d\tilde{q}-\int_0^t\tilde{q}^{2\alpha+2H-3}d\tilde
{q}\right),
\end{align*}
where the conditions $\alpha>0$ and $\alpha+H>1$ are needed to ensure the
convergence of the singular integrals. Then we have
\begin{align*}
&\sum_{n=0}^{\infty}\left(\begin{array}{c}
2H-2\\
n
\end{array}\right)(-1)^n\frac{1}{\alpha+2H-2-n}\left(t^{\alpha+2H-2-n}
\int_0^t\tilde{q}^{n+\alpha-1}d\tilde{q}-\int_0^t\tilde{q}^{2\alpha+2H-3}d\tilde
{q}\right).\\
=&\sum_{n=0}^{\infty}\left(\begin{array}{c}
2H-2\\
n
\end{array}\right)(-1)^n\frac{1}{\alpha+2H-2-n}\left(t^{\alpha+2H-2-n}\frac{t^{
n+\alpha}}{n+\alpha}-\frac{t^{2\alpha+2H-2}}{2\alpha+2H-2}\right)\\
=&\frac{t^{2\alpha+2H-2}}{2\alpha+2H-2}\sum_{n=0}^{\infty}\left(\begin{array}{c}
2H-2\\
n
\end{array}\right)(-1)^n\frac{1}{\alpha+n}.
\end{align*}

Combining Case I and Case II, we get
\begin{align*}
&\int_0^t\tilde{q}^{\alpha-1}\sum_{n=0}^{\infty}\left(\begin{array}{c}
2H-2\\
n
\end{array}\right)(-1)^n\tilde{q}^n\left(\int_{\tilde{q}}^t\tilde{p}^{
\alpha+2H-3-n}d\tilde{p}\right)d\tilde{q}\\
=&\frac{t^{2\alpha+2H-2}}{2\alpha+2H-2}\sum_{n=0}^{\infty}\left(\begin{array}{c}
2H-2\\
n
\end{array}\right)(-1)^n\frac{1}{\alpha+n}.
\end{align*}
It is easy to know from the asymptotic expansion for the binomial coefficients
that the above series is convergent. Therefore,
\begin{align}\label{eq:EeH121}
&\mathbb{E}\left[\left(\int_0^t(t-\tau)^{\alpha-1}E_{\alpha,\alpha}
(-\lambda_k(t-\tau)^{\alpha})dB^H(\tau)\right)^2\right]\lesssim
t^{2\alpha+2H-2}.
\end{align}

\subsection{Estimates of the solution}

In this section, we discuss the stability of the solution. From
(\ref{eq:EeH012})--(\ref{eq:EeH121}) and the analysis for $H=\frac{1}{2}$ in
\cite{Niu+2018}, for $0<H<1, 0<\alpha\leq1$ and $\alpha+H>1$, there holds
\begin{align}\label{eq:EeH01}
&\mathbb{E}\left[\left(\int_0^t(t-\tau)^{\alpha-1}E_{\alpha,\alpha}
(-\lambda_k(t-\tau)^{\alpha})dB^H(\tau)\right)^2\right]\lesssim
t^{2\alpha+2H-2}.
\end{align}
With the help of \eqref{eq:EeH01}, we obtain the stability estimates for the
mild solution (\ref{Dsolution}).

\begin{theorem}\label{thereom1}
Let $0<H<1, 0<\alpha\leq1$ and $\alpha+H>1$. Then the stochastic process $u$
given in (\ref{Dsolution}) satisfies
\begin{equation*}
\mathbb{E}(\|u\|^2_{L^2({D}\times[0,T])})\lesssim\|h\|^{2}_{L^2(0,T)}\|f\|^
{2}_{L^2({D})}+T^{2\alpha+2H-1}\|g\|^{2}_{L^2({D})}.
\end{equation*}
\end{theorem}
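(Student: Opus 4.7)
My plan is simply to assemble the pieces established in the preceding two subsections. Returning to the decomposition \eqref{ex}, I would first invoke \eqref{S1}, which already yields
\[
S_1 \lesssim \|h\|^2_{L^2(0,T)}\|f\|^2_{L^2(D)}
\]
and accounts for the first term in the claimed bound. All the work for this half has been done by the Young convolution inequality together with the $L^1$ estimate \eqref{G} for the Mittag--Leffler kernel $G_{\alpha,k}$, both of which are independent of $k$.

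For the stochastic part $S_2$, I would exploit the linearity of the stochastic integral in the deterministic coefficient $g_k$, so that \eqref{exIk2} gives
\[
\mathbb{E}\bigl[I_{k,2}^2(t,\omega)\bigr] = g_k^2\, \mathbb{E}\left[\left(\int_0^t (t-\tau)^{\alpha-1}E_{\alpha,\alpha}(-\lambda_k(t-\tau)^\alpha)\, dB^H(\tau)\right)^2\right].
\]
The unified moment bound \eqref{eq:EeH01}, which consolidates the two cases $H\in(0,\tfrac12)$ and $H\in(\tfrac12,1)$ treated in Sections \ref{subsec3.1}--\ref{subsec3.2} together with the $H=\tfrac12$ result from \cite{Niu+2018}, then yields
\[
\mathbb{E}\bigl[I_{k,2}^2(t,\omega)\bigr] \lesssim g_k^2\, t^{2\alpha+2H-2},
\]
with the implicit constant independent of $k$ thanks to the uniform bound $|E_{\alpha,\alpha}(-\lambda_k r^\alpha)|\leq C$ provided by Lemma \ref{MLinq} for $\lambda_k\geq 0$.

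I would then swap sum and integral (legitimate by nonnegativity and Tonelli's theorem) and apply Parseval's identity $\sum_{k=1}^\infty g_k^2 = \|g\|^2_{L^2(D)}$ to obtain
\[
S_2 = \int_0^T \sum_{k=1}^\infty \mathbb{E}\bigl[I_{k,2}^2(t,\omega)\bigr]\, dt \lesssim \|g\|^2_{L^2(D)}\int_0^T t^{2\alpha+2H-2}\, dt = \frac{T^{2\alpha+2H-1}}{2\alpha+2H-1}\|g\|^2_{L^2(D)},
\]
where the condition $\alpha+H>1$ ensures that $t^{2\alpha+2H-2}$ is integrable on $(0,T)$. Adding the $S_1$ and $S_2$ bounds produces the stated inequality.

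The genuine difficulty, establishing the uniform-in-$k$ moment bound \eqref{eq:EeH01} for the stochastic convolution against $B^H$, has already been dispatched in Sections \ref{subsec3.1}--\ref{subsec3.2}; what remains here is essentially bookkeeping. The only subtlety worth double-checking is that the constant in \eqref{eq:EeH01} does not degenerate as $\lambda_k\to\infty$, and this is immediate since every Mittag--Leffler factor appearing in the estimates of $I_1(t), I_2(t), I_3(t)$ and in the Case I / Case II computations was replaced from the outset by its uniform upper bound via Lemma \ref{MLinq}.
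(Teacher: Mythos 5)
Your proposal is correct and follows essentially the same route as the paper, which proves Theorem \ref{thereom1} by combining \eqref{ex}, \eqref{S1}, \eqref{exIk2}, and the unified moment bound \eqref{eq:EeH01}, then integrating $t^{2\alpha+2H-2}$ over $[0,T]$. Your write-up simply makes explicit the bookkeeping (Tonelli, Parseval, uniformity in $k$) that the paper leaves to the reader.
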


\begin{proof}
The proof follows easily from (\ref{ex}), (\ref{S1}), (\ref{exIk2}),
(\ref{eq:EeH01}). Especially, one can check it is also true for $\alpha=1$.
\end{proof}

Similarly, we also have the following stability results.
\begin{theorem}\label{thereom2}
Let $0<H<1, 0<\alpha\leq1$ and $\alpha+H>1$. The supremum of the expected norm
of the solution satisfies
\begin{equation*}
\sup_{0\leqslant t\leqslant
T}\mathbb{E}\left[\|u(\cdot,t)\|^2_{L^2({D})}\right]\lesssim\|h\|^{2}_{L^{
\infty}(0,T)}\|f\|^{2}_{L^2({D})}+T^{2\alpha+2H-2}\|g\|^{2}_{L^2({D})}
.
\end{equation*}
Moreover, if condition $g\in H^2({D})$ is added, there also holds
\begin{equation*}
\sup_{0\leqslant t\leqslant
T}\mathbb{E}\left[\|u(\cdot,t)\|^2_{H^2({D})}\right]\lesssim\|h\|^{2}_{L^{
\infty}(0,T)}\|f\|^{2}_{L^2({D})}+T^{2\alpha+2H-2}\|g\|^{2}_{H^2({D})}
.
\end{equation*}
\end{theorem}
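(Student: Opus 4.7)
The plan is to follow the same scheme as the proof of Theorem \ref{thereom1}, but with pointwise-in-$t$ bookkeeping and, for the $H^2$ statement, a frequency-weighted version of the energy argument. Writing the mild solution as $u(x,t,\omega)=\sum_{k=1}^\infty (I_{k,1}(t)+I_{k,2}(t,\omega))\varphi_k(x)$ and using the orthonormality of $\{\varphi_k\}$ together with the fact that the Wiener-type integral against $B^H$ of a deterministic kernel has mean zero, I obtain the decomposition
\begin{equation*}
\mathbb{E}\|u(\cdot,t)\|^2_{L^2(D)} = \sum_{k=1}^{\infty} I^2_{k,1}(t) + \sum_{k=1}^{\infty} \mathbb{E}[I^2_{k,2}(t,\omega)].
\end{equation*}

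For the first estimate, I control the deterministic term by pulling out $\|h\|_{L^\infty(0,T)}$ and applying Lemma \ref{MLinq}, yielding $|I_{k,1}(t)|\lesssim |f_k|\|h\|_{L^\infty(0,T)} T^\alpha$, which after squaring and summing produces $\|h\|^2_{L^\infty(0,T)}\|f\|^2_{L^2(D)}$ (with a constant depending on $T$). For the stochastic term I apply \eqref{eq:EeH01} term by term to obtain $\sum_k \mathbb{E}[I^2_{k,2}(t,\omega)]\lesssim t^{2\alpha+2H-2}\|g\|^2_{L^2(D)}$. Since $\alpha+H>1$, the map $t\mapsto t^{2\alpha+2H-2}$ is nondecreasing on $[0,T]$, so taking $\sup_{t\in[0,T]}$ replaces $t^{2\alpha+2H-2}$ by $T^{2\alpha+2H-2}$ and completes the first claim.

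For the $H^2$ estimate I invoke the standard equivalence $\|v\|^2_{H^2(D)}\sim \sum_k \lambda_k^2 v_k^2$ for $v\in H^2(D)\cap H^1_0(D)$ coming from elliptic regularity for the Dirichlet Laplacian. The stochastic contribution then follows immediately from \eqref{eq:EeH01}: summing $\lambda_k^2\mathbb{E}[I^2_{k,2}(t,\omega)]\lesssim t^{2\alpha+2H-2}\lambda_k^2 g_k^2$ yields $t^{2\alpha+2H-2}\|g\|^2_{H^2(D)}$. The deterministic contribution is where the real work lies: the naive bound via Lemma \ref{MLinq} only gives $|I_{k,1}(t)|\lesssim |f_k|\|h\|_{L^\infty(0,T)}\log(1+\lambda_k T^\alpha)/\lambda_k$, which, weighted by $\lambda_k^2$, is not summable against a mere $L^2$ coefficient sequence. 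To upgrade this, I use Lemma \ref{dEalf} to note the identity
\begin{equation*}
\int_0^t s^{\alpha-1} E_{\alpha,\alpha}(-\lambda_k s^\alpha)\,ds = \frac{1-E_{\alpha,1}(-\lambda_k t^\alpha)}{\lambda_k},
\end{equation*}
and Lemma \ref{c.m.} then furnishes $0\le E_{\alpha,1}(-\lambda_k t^\alpha)\le 1$; combined with the nonnegativity of $E_{\alpha,\alpha}$ on $(-\infty,0]$ from Lemma \ref{Epositive}, this yields the sharp pointwise bound $|I_{k,1}(t)|\le |f_k|\|h\|_{L^\infty(0,T)}/\lambda_k$. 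Consequently $\sum_k \lambda_k^2 I^2_{k,1}(t)\lesssim \|h\|^2_{L^\infty(0,T)}\|f\|^2_{L^2(D)}$, and taking the supremum in $t$ as before finishes the proof.

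The main obstacle is precisely this $1/\lambda_k$ bound on $I_{k,1}$ that is essential for the $H^2$ claim; everything else is a bookkeeping exercise using the stochastic estimates already established in Sections \ref{subsec3.1}--\ref{subsec3.2}.
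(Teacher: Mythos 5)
Your proof is correct and follows exactly the route the paper itself defers to (the paper only cites \cite[Lemma 3.5]{Niu+2018} and omits all details): the pointwise-in-$t$ orthogonal decomposition with vanishing cross terms, the uniform-in-$k$ bound \eqref{eq:EeH01} for the stochastic part, and---for the $H^2$ estimate---the identity $\int_0^t s^{\alpha-1}E_{\alpha,\alpha}(-\lambda_k s^{\alpha})\,ds=\bigl(1-E_{\alpha,1}(-\lambda_k t^{\alpha})\bigr)/\lambda_k$ combined with $0\le E_{\alpha,1}(-\lambda_k t^{\alpha})\le 1$ from complete monotonicity, which yields the crucial bound $|I_{k,1}(t)|\le |f_k|\,\|h\|_{L^{\infty}(0,T)}/\lambda_k$ that the naive logarithmic estimate cannot deliver. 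I see no gaps.
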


\begin{proof}
The theorem can be proved by following similar arguments for the case
$H=\frac12, \alpha\in(\frac12,1)$ in \cite[Lemma 3.5]{Niu+2018}. The details
are omitted for brevity.
\end{proof}

Although we only show the details for the Laplacian operator in \eqref{fde},
the method can be applied to following initial-bound value problem for the
stochastic fractional diffusion equation with the fractional Laplacian operator:
\begin{equation*}
\left\{
\begin{array}{ll}
\partial_{t}^{\alpha}u(x,t)+(-\Delta)^{s}u(x,t)=f(x)h(t)+g(x)\dot{B}^H(t), &
(x,t)\in{D}\times(0,T),\\
u(x,t)=0, & (x,t)\in\mathbb{R}^n\backslash{D}\times[0,T], \\
u(x,0)=0, & x\in{D},
\end{array}
\right.
\end{equation*}
where $0<\alpha\leq1$, $0<s<1$, $0<H<1$, and $\alpha+H>1$. The fractional
Laplacian operator $(-\Delta)^{s}$ is defined as follows \cite[formula
(3.1)]{Nezza+2012}:
\begin{equation*}
(-\Delta)^{s}u(x,t)=C_{n,s}{\rm
p.v.}\int_{\mathbb{R}^n}\frac{u(x,t)-u(y,t)}{|x-y|^{ n+2s}}dy,
\end{equation*}
where $C_{n, s}$ is a positive constant depending on $n$ and $s$. Using the
properties of the eigensystem for the fractional
Laplacian operator $(-\Delta)^{s}$ given in \cite[Proposition
2.1]{Xavier+2016},
one can also use the method of separation of variables to obtain a mild
solution. Then all the results are the same except the second result in Theorem
\ref{thereom2}. But it can be easily shown that if $g\in H^s({D})$, then there
holds
\begin{equation*}
\sup_{0\leqslant t\leqslant
T}\mathbb{E}\left[\|u(\cdot,t)\|^2_{H^s({D})}\right]\lesssim\|h\|^{2}_{L^{
\infty}(0,T)}\|f\|^{2}_{L^2({D})}+T^{2\alpha+2H-2}\|g\|^{2}_{H^s({D})}
.
\end{equation*}
The fractional Sobolev space $H^s({D})$ can be found in \cite{Nezza+2012}
and related references therein.

\section{The inverse problem}

In this section, we consider the inverse problem of reconstructing $f$ and
$|g|$ from the empirical expectation and correlations of the final time data
$u(x,T)$. More specifically, the data may be assumed to be given by
\[
 u_k(T,\omega)=(u(T,\cdot,\omega),\varphi_k(\cdot))_{L^2(D)}.
\]
We shall discuss the uniqueness and the issue of instability,
separately.

It follows from (\ref{Dsolution})--(\ref{Ik2}) that we have
\begin{equation}\label{Iexp}
\mathbb{E}(u_k(T,\omega))=f_k\int_0^T(T-\tau)^{\alpha-1}E_{\alpha,\alpha}
(-\lambda_k(T-\tau)^{\alpha})h(\tau)d\tau
\end{equation}
and
\begin{equation}\label{Ivar}
\mathbb{V}(u_k(T,\omega))=g_k^2\, \mathbb{E}\left|\int_0^T(T-\tau)^{\alpha-1}E_{\alpha,\alpha}(-\lambda_k(T-\tau)^{\alpha})dB^H(\tau)\right|^2.
\end{equation}
Moreover, a straightforward calculation yields that
\begin{align}\label{Icvar}
&\text{Cov}(u_k(T,\omega),u_l(T,\omega))\notag\\
=&g_kg_l\,
\mathbb{E}\bigg[(\int_0^T(T-\tau)^{\alpha-1}E_{\alpha,\alpha}(-\lambda_k(T-\tau)
^{\alpha})dB^H(\tau))\notag\\
&\quad \times(\int_0^T(T-\tau)^{\alpha-1}E_{\alpha,\alpha}(-\lambda_l(
T -\tau)^{\alpha})dB^H(\tau))\bigg].
\end{align}

\begin{lemma}\label{Lowbnd1}
Suppose Assumption \ref{assumption} holds. For each fixed $k\in \mathbb{N}$, there exists a constant $C_1>0$ such that
\begin{equation*}
\int_0^T(T-\tau)^{\alpha-1}E_{\alpha,\alpha}(-\lambda_k(T-\tau)^{\alpha})h(\tau)d\tau\geq C_1>0.
\end{equation*}
\end{lemma}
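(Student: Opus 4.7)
The plan is to show that the integrand is pointwise nonnegative, pull out the lower bound on $h$, and then compute the remaining integral explicitly via the fundamental relation in Lemma \ref{dEalf}. All three ingredients are already collected in Section 2.

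First I would invoke Lemma \ref{Epositive} to conclude that $E_{\alpha,\alpha}(-\lambda_k(T-\tau)^\alpha)\geq 0$ for all $\tau\in(0,T)$. Since $(T-\tau)^{\alpha-1}>0$ and $h(\tau)\geq c_h>0$ by Assumption \ref{assumption}, the integrand is nonnegative, so
\[
\int_0^T(T-\tau)^{\alpha-1}E_{\alpha,\alpha}(-\lambda_k(T-\tau)^{\alpha})h(\tau)d\tau
\;\geq\; c_h\int_0^T(T-\tau)^{\alpha-1}E_{\alpha,\alpha}(-\lambda_k(T-\tau)^{\alpha})d\tau.
\]

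Second, after the change of variables $s=T-\tau$, I would use Lemma \ref{dEalf} to write $s^{\alpha-1}E_{\alpha,\alpha}(-\lambda_k s^\alpha)=-\lambda_k^{-1}\frac{d}{ds}E_{\alpha,1}(-\lambda_k s^\alpha)$, so that the remaining integral telescopes:
\[
c_h\int_0^T s^{\alpha-1}E_{\alpha,\alpha}(-\lambda_k s^{\alpha})ds
=\frac{c_h}{\lambda_k}\Bigl[1-E_{\alpha,1}(-\lambda_k T^{\alpha})\Bigr].
\]

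Finally, I would check that the bracket is strictly positive. This is the only mildly nontrivial step: by Lemma \ref{c.m.}, $E_{\alpha,1}(-x)$ is completely monotonic and hence nonincreasing, giving $E_{\alpha,1}(-\lambda_k T^\alpha)\le E_{\alpha,1}(0)=1$. Strict inequality then follows because, by continuity and $E_{\alpha,\alpha}(0)=1/\Gamma(\alpha)>0$, the integrand in the previous display is strictly positive on a neighborhood of $s=0$. Thus one may take
\[
C_1=\frac{c_h}{\lambda_k}\Bigl[1-E_{\alpha,1}(-\lambda_k T^{\alpha})\Bigr]>0,
\]
which depends on $k$ but is positive for each fixed $k$, as required. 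The main (minor) obstacle is this last strictness argument; otherwise the proof is essentially a direct application of Lemmas \ref{dEalf}, \ref{c.m.} and \ref{Epositive} together with the positivity assumption on $h$.
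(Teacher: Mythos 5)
Your proof is correct, but it takes a different route from the paper's. The paper also begins with the substitution $\tilde\tau=T-\tau$, but then uses the \emph{second} half of Lemma \ref{Epositive} --- that $s^{\alpha-1}E_{\alpha,\alpha}(-\lambda_k s^{\alpha})$ is monotonically decreasing --- to bound the kernel pointwise from below by its endpoint value $T^{\alpha-1}E_{\alpha,\alpha}(-\lambda_k T^{\alpha})$, which after integrating $h\ge c_h$ gives $C_1=c_hT^{\alpha}E_{\alpha,\alpha}(-\lambda_kT^{\alpha})$ in one line. You instead pull out $c_h$ first and evaluate the remaining integral exactly via the antiderivative identity of Lemma \ref{dEalf}, arriving at the closed form $C_1=\frac{c_h}{\lambda_k}\bigl[1-E_{\alpha,1}(-\lambda_kT^{\alpha})\bigr]$. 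The trade-off: the paper's argument is shorter, but its constant is strictly positive only because $E_{\alpha,\alpha}(-\lambda_kT^{\alpha})>0$ \emph{strictly}, which Lemma \ref{Epositive} as stated only gives as $\ge 0$ (the strict inequality is true but left implicit). Your version sidesteps this by deducing strictness from the continuity of the nonnegative integrand near $s=0$ together with $E_{\alpha,\alpha}(0)=1/\Gamma(\alpha)>0$, so on this point your argument is the more carefully closed of the two; it also yields an explicit formula for $C_1$ that makes the $\lambda_k^{-1}$ decay visible, which is consistent with the instability estimate \eqref{eq:ille1} later in the paper.
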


\begin{proof}
Letting $\tilde{\tau}=T-\tau$, we have from Lemma \ref{Epositive} and Assumption
\ref{assumption} that 
\begin{align*}
&\int_0^T(T-\tau)^{\alpha-1}E_{\alpha,\alpha}(-\lambda_k(T-\tau)^{\alpha}
)h(\tau)d\tau\\
=&\int_0^T\tilde{\tau}^{\alpha-1}E_{\alpha,\alpha}(-\lambda_k\tilde{\tau}^{
\alpha})h(T-\tilde{\tau})d\tilde{\tau}\\
\geq & T^{\alpha-1}E_{\alpha,\alpha}(-\lambda_kT^{\alpha})\int_0^T
h(T-\tilde{\tau})d\tilde{\tau}\\
\geq & c_h T^{\alpha}E_{\alpha,\alpha}(-\lambda_kT^{\alpha})=:C_1>0,
\end{align*}
which completes the proof. 
\end{proof}

\begin{lemma}\label{Lowbnd2}
Suppose Assumption \ref{assumption} holds. For each fixed $k,l\in \mathbb{N}$, there exists a constant $C_2>0$ such that
\begin{align*}
&\mathbb{E}\bigg[(\int_0^T(T-\tau)^{\alpha-1}E_{\alpha,\alpha}(-\lambda_k(T-\tau)
^{\alpha})dB^H(\tau))\\
&\quad \times(\int_0^T(T-\tau)^{\alpha-1}E_{\alpha,\alpha}(-\lambda_l(
T -\tau)^{\alpha})dB^H(\tau))\bigg]\geq C_2>0.
\end{align*}
\end{lemma}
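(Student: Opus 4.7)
The plan is to follow the trichotomy in Section 3 and treat $H=1/2$, $H\in(1/2,1)$, and $H\in(0,1/2)$ separately, using in each case the appropriate covariance representation together with the positivity and monotonicity of $G_{\alpha,j}(t):=t^{\alpha-1}E_{\alpha,\alpha}(-\lambda_j t^{\alpha})$ supplied by Lemma \ref{Epositive}.

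For $H=1/2$, the It\^o isometry yields $\mathbb{E}[I_kI_l]=\int_0^T G_{\alpha,k}(T-\tau)G_{\alpha,l}(T-\tau)\,d\tau$; restricting to $\tau\in[0,T/2]$ and using $G_{\alpha,j}(T-\tau)\ge G_{\alpha,j}(T)$ (by the monotone-decreasing part of Lemma \ref{Epositive}) yields the lower bound $(T/2)\,G_{\alpha,k}(T)\,G_{\alpha,l}(T)>0$. For $H\in(1/2,1)$, the double-integral representation derived in Section 3.2 is $\alpha_H\int_0^T\!\int_0^T G_{\alpha,k}(T-p)G_{\alpha,l}(T-q)|p-q|^{2H-2}\,dp\,dq$ with $\alpha_H>0$ and pointwise nonneg integrand (Lemma \ref{Epositive}); restricting to the rectangle $(p,q)\in[0,T/3]\times[2T/3,T]$, on which $T-p\in[2T/3,T]$, $T-q\in[0,T/3]$, and $|p-q|\le T$ (so $|p-q|^{2H-2}\ge T^{2H-2}$ since $2H-2<0$), and then invoking monotonicity of $G_{\alpha,j}$ to bound $G_{\alpha,k}(T-p)\ge G_{\alpha,k}(T)$ and $G_{\alpha,l}(T-q)\ge G_{\alpha,l}(T/3)$, one obtains the strictly positive lower bound $\alpha_H\,G_{\alpha,k}(T)\,G_{\alpha,l}(T/3)\,T^{2H}/9$.

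The case $H\in(0,1/2)$ is the principal obstacle. The covariance becomes $\mathbb{E}[I_kI_l]=\int_0^T K_{H,T}^{*}(f_k)(\tau)\,K_{H,T}^{*}(f_l)(\tau)\,d\tau$ with $f_j(\tau):=G_{\alpha,j}(T-\tau)$, but the expansion used in Section 3.1, namely $K_{H,T}^{*}(f_j)(\tau)=K_H(T,\tau)f_j(\tau)+\int_\tau^T[f_j(u)-f_j(\tau)]\partial_u K_H(u,\tau)\,du$, contains a positive first summand and (since $f_j$ is increasing in $\tau$ while $\partial_u K_H(u,\tau)<0$ for $H<1/2$) a negative second summand, so the sign of $K_{H,T}^{*}(f_j)$ is not manifest. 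I would establish $K_{H,T}^{*}(f_j)(s)>0$ by reverting to the equivalent fractional-derivative form $K_{H,T}^{*}(f_j)(s)=c_H\,s^{1/2-H}D_{T-}^{1/2-H}(u^{H-1/2}f_j(u))(s)$ and exploiting that $u\mapsto u^{H-1/2}f_j(u)$ is strictly positive, strictly increasing on $(0,T)$, and unbounded as $u\to T^-$ (since $f_j(u)\sim(T-u)^{\alpha-1}/\Gamma(\alpha)$ near $u=T$). Once pointwise positivity of both $K_{H,T}^{*}(f_k)$ and $K_{H,T}^{*}(f_l)$ is obtained, their $L^2$ inner product on $(0,T)$ is strictly positive, and a quantitative lower bound follows by estimating each factor from below on a compact subinterval of $(0,T)$ bounded away from both endpoints.
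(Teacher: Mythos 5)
Your treatment of $H=\tfrac12$ (It\^o isometry plus monotonicity of $G_{\alpha,j}$ on a subinterval) and of $H\in(\tfrac12,1)$ (the double-integral covariance with a pointwise nonnegative integrand, bounded below on a fixed sub-rectangle) is correct and essentially the same as the paper's argument for $H>\tfrac12$; the paper integrates over the whole square and gets the explicit constant $\frac{\alpha_H T^{2(\alpha+H-1)}}{H(2H-1)}E_{\alpha,\alpha}(-\lambda_kT^{\alpha})E_{\alpha,\alpha}(-\lambda_lT^{\alpha})$, but that difference is immaterial.

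The genuine gap is in the case $H\in(0,\tfrac12)$, which you correctly identify as the crux but do not actually close. Your plan is to prove pointwise positivity of $K_{H,T}^{*}\phi_j$ from the representation $c_H s^{1/2-H}D_{T-}^{1/2-H}\bigl(u^{H-1/2}\phi_j(u)\bigr)(s)$ ``exploiting that $u\mapsto u^{H-1/2}\phi_j(u)$ is strictly positive, strictly increasing, and unbounded as $u\to T^-$.'' This does not follow: in the Marchaud form of the \emph{right-sided} fractional derivative, $D_{T-}^{\gamma}g(s)=\frac{1}{\Gamma(1-\gamma)}\bigl[\frac{g(s)}{(T-s)^{\gamma}}+\gamma\int_s^T\frac{g(s)-g(u)}{(u-s)^{1+\gamma}}du\bigr]$, an \emph{increasing} $g$ makes the integral term \emph{negative} (it is the left-sided derivative for which positivity and monotone increase of $g$ immediately give a positive result), and the blow-up of $g$ at $T^-$ only makes that negative contribution harder to control. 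So the central positivity claim is asserted, not proved, and the stated reasons point the wrong way. The paper proceeds differently: it keeps the decomposition $K_{H,T}^{*}\phi_k(s)=K_H(T,s)\phi_k(s)+\int_s^T(\phi_k(u)-\phi_k(s))\partial_uK_H(u,s)\,du$, notes $K_H(T,s)>0$, and applies the mean value theorem twice to rewrite the integral term as $M_H(s)\phi_k'(u_k^{**})$ with $M_H(s)=c_H\int_s^T(u/s)^{H-1/2}(u-s)^{H-1/2}du>0$ and $\phi_k'>0$ (using Lemmas \ref{dEalf} and \ref{c.m.}), so that $I_{kl}$ splits into four manifestly positive $L^2(0,T)$ inner products, each bounded below by evaluating $\phi_j$ and $\phi_j'$ at the endpoints. (Note that this argument hinges on the sign of $\partial_uK_H$ as stated in the paper's appendix, which is opposite to the sign you assign to it; if you believe the kernel derivative carries an extra factor $H-\tfrac12<0$, then you must confront that discrepancy explicitly, because it changes which terms are positive.) As written, your $H\in(0,\tfrac12)$ case is a program, not a proof.
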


\begin{proof}
Let 
$\phi_k(s)=(T-s)^{\alpha-1}E_{\alpha,\alpha}(-\lambda_k(T-s)^{\alpha }
)$ and 
\[
I_{kl}=\mathbb{E}\bigg[
\big(\int_0^T\phi_k(\tau)dB^H(\tau)\big)\big(\int_0^T\phi_l(\tau)dB^H(\tau)\big)
\bigg] .
\]
We consider $I_{kl}$ for $H\in(\frac12,1)$ and $H\in(0,\frac12)$, separately.

For $H\in(\frac12,1)$, we have from \eqref{V121} that 
\begin{align*}
I_{kl}&=\alpha_H\int_0^T\int_0^T\phi_k(r)\phi_l(u)|r-u|^{2H-2}dudr\\
&=\alpha_H\int_0^T\int_0^T(T-r)^{\alpha-1}E_{\alpha,\alpha}(-\lambda_k(T-r)^{
\alpha})(T-u)^{\alpha-1}E_{\alpha,\alpha}(-\lambda_l(T-u)^{\alpha})|r-u|^{2H-2}
dudr.
\end{align*}
Set $\tilde{r}=T-r, \tilde{u}=T-u$. A simple calculation yields 
\begin{align*}
I_{kl}&=\alpha_H\int_0^T\int_0^T\tilde{r}^{\alpha-1}E_{\alpha,\alpha}
(-\lambda_k\tilde{r}^{\alpha})\tilde{u}^{\alpha-1}E_{\alpha,\alpha}
(-\lambda_l\tilde{u}^{\alpha})|\tilde{u}-\tilde{r}|^{2H-2}d\tilde{u}d\tilde{r}.
\end{align*}
By Lemma \ref{Epositive}, the
function
$\tilde{\tau}^{\alpha-1}E_{\alpha,\alpha}(-\lambda_k\tilde{\tau}^{\alpha })$ is
a monotonically decreasing function with respect to $\tilde\tau$. Hence
\begin{align*}
I_{kl}&\geq\alpha_H\int_0^T\int_0^TT^{\alpha-1}E_{\alpha,\alpha}(-\lambda_kT^{\alpha})T^{\alpha-1}E_{\alpha,\alpha}(-\lambda_lT^{\alpha})|\tilde{u}-\tilde{r}|^{2H-2}d\tilde{u}d\tilde{r}\\
&=\alpha_H
T^{2\alpha-2}E_{\alpha,\alpha}(-\lambda_kT^{\alpha})E_{\alpha,\alpha}
(-\lambda_lT^{\alpha})\int_0^T\int_0^T|\tilde{u}-\tilde{r}|^{2H-2}d\tilde{u}
d\tilde{r}\\
&=\frac{\alpha_H
T^{2(\alpha+H-1)}}{H(2H-1)}E_{\alpha,\alpha}(-\lambda_kT^{
\alpha})E_{ \alpha,\alpha}(-\lambda_lT^{\alpha})=:C_2>0.
\end{align*}

For $H\in(0,\frac12)$, by \eqref{V012}, we have
\begin{align*}
I_{kl}&=\langle K_{H,T}^*\phi_k,K_{H,T}^*\phi_l\rangle_{L^2(0,T)},
\end{align*}
where 
\[
(K_{H,T}^*\phi_k)(s)=K_H(T,
s)\phi_k(s)+\int_s^T(\phi_k(u)-\phi_k(s))\frac{\partial
K_H(u,s)}{\partial u}du,
\]
\[
K_H(T,s)=c_H\left[\left(\frac Ts\right)^{H-\frac12}(T-s)^{H-\frac12}-\left(H-\frac12\right)s^{\frac12-H}\int_s^Tu^{H-\frac32}(u-s)^{H-\frac12}du\right],
\]
\[
\frac{\partial K_H(u,s)}{\partial u}=c_H\left(\frac us\right)^{H-\frac12}(u-s)^{H-\frac32}.
\]
Obviously, $K_H(T,s)>0$ since $H\in(0,\frac12), 0<s<T$. It follows from the
mean value theorem that 
\begin{align*}
&\int_s^T(\phi_k(u)-\phi_k(s))\frac{\partial K_H(u,s)}{\partial u}du\\
=&c_H \int_s^T(\phi_k(u)-\phi_k(s))\left(\frac
us\right)^{H-\frac12}(u-s)^{H-\frac32}du\\
=&c_H \int_s^T\phi'_k(u_k^*)\left(\frac
us\right)^{H-\frac12}(u-s)^{H-\frac12}du\qquad(s<u_k^*<u<T)\\
=&c_H\phi'_k(u_k^{**})\int_s^T\left(\frac
us\right)^{H-\frac12}(u-s)^{H-\frac12}du\qquad(s<u_k^{**}<T)\\
=&M_H(s)\phi'_k(u_k^{**}),
\end{align*}
where 
\[
M_H(s)= c_H\int_s^T\left(\frac
us\right)^{H-\frac12}(u-s)^{H-\frac12}du>0.
\]
A simple calculation gives that 
\begin{align*}
I_{kl}=&\langle K_H(T,s)\phi_k(s)+
M_H(s)\phi'_k(u_k^{**}),K_H(T,s)\varphi_l(s)+
M_H(s)\phi'_l(u_l^{**})\rangle_{L^2(0,T)}\\
=&\langle K_H(T,s)\phi_k(s),K_H(T,s)\phi_l(s)\rangle_{L^2(0,T)}+\langle
M_H(s)\phi'_k(u_k^{**}),M_H(s)\phi'_l(u_l^{**})\rangle_{L^2(0,T)}\\
&+\langle
K_H(T,s)\phi_k(s),M_H(s)\phi'_l(u_l^{**})\rangle_{L^2(0,T)}+\langle
M_H(s)\phi'_k(u_k^{**}),K_H(T,s)\phi_l(s)\rangle_{L^2(0,T)}.
\end{align*}
It follows Lemma \ref{Epositive} again that there holds 
\begin{align*}
&\langle K_H(T,s)\phi_k(s),K_H(T,s)\phi_l(s)\rangle_{L^2(0,T)}\\
=&\int_0^TK_H^2(T,s)\phi_k(s)\phi_l(s)ds\\
\geq&T^{2\alpha-2}E_{\alpha,\alpha}(-\lambda_k
T^\alpha)E_{\alpha,\alpha}(-\lambda_l
T^\alpha)\int_0^TK_H^2(T,s)ds:=\tilde c_1>0.
\end{align*}
Using Lemmas \ref{dEalf} and \ref{c.m.}, and noting $T-s=t$, we obtain that 
$\phi_k(s)> 0$ and $\phi_k(s)$ is a monotonically increasing function;
$\phi_k'(s)> 0$ and $\phi_k'(s)$ is a monotonically decreasing function,
which imply $\phi_k(s)\geq \phi_k(0)>0$ and $\varphi'_k(s)\geq\varphi'_k(T)>0$
for $0<s<T$. Hence
\[
\langle
M_H(s)\phi'_k(u_k^{**}),M_H(s)\phi'_l(u_l^{**})\rangle_{L^2(0,T)}
\geq\phi'_k(T)\phi'_l(T)\int_0^TM_H^2(s)ds:=\tilde c_2>0.
\] 
Similarly, we have
\[
\langle
K_H(T,s)\phi_k(s),M_H(s)\phi'_l(u_l^{**})\rangle_{L^2(0,T)}
\geq\phi_k(0)\phi'_l(T)\int_0^TK_H(T,s)M_H(s)ds:=\tilde c_3>0
\]
and
\[
\langle
M_H(s)\phi'_k(u_k^{**}),K_H(T,s)\phi_l(s)\rangle_{L^2(0,T)}
\geq\phi'_k(T)\phi_l(0)\int_0^TK_H(T,s)M_H(s)ds:=\tilde c_4>0.
\]
Combining the above estimates gives
\[
 I_{kl}\geq \sum_{j=1}^4\tilde c_j:=C_2>0,
\]
which completes the proof.
\end{proof}

Combining \eqref{Iexp}--\eqref{Icvar} and Lemmas \ref{Lowbnd1} and \ref{Lowbnd2}, we obtain the uniqueness of the inverse problem.

\begin{theorem}
Suppose Assumption \ref{assumption} holds. Then the quantities
\begin{equation*}
\{\mathbb{E}(u_k(T,\omega)), \text{Cov}(u_k(T,\omega),u_l(T,\omega)): k,l \in
\mathbb{N}\}
\end{equation*}
can determine the source terms $f$ and $|g|$ uniquely.
\end{theorem}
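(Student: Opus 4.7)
The plan is to leverage the completeness of the eigenbasis $\{\varphi_k\}$ together with the two lower bounds established in Lemmas \ref{Lowbnd1} and \ref{Lowbnd2}. Since $\{\varphi_k\}$ is a complete orthogonal basis of $L^2(D)$ and $f=\sum_k f_k\varphi_k$, $g=\sum_k g_k\varphi_k$, recovering $f$ amounts to recovering the Fourier coefficients $f_k=(f,\varphi_k)_{L^2(D)}$, while recovering $|g|$ as a function will follow once the $g_k$ are determined up to the single global sign ambiguity $g\mapsto -g$, because $|g|=|{-g}|$.

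First, I would use the identity \eqref{Iexp}, which reads
\[
\mathbb{E}(u_k(T,\omega))=f_k\int_0^T(T-\tau)^{\alpha-1}E_{\alpha,\alpha}(-\lambda_k(T-\tau)^{\alpha})h(\tau)d\tau.
\]
By Lemma \ref{Lowbnd1}, the integral on the right-hand side is bounded below by a strictly positive constant $C_1>0$, so it never vanishes. Hence each $f_k$ is uniquely determined from $\mathbb{E}(u_k(T,\omega))$ by division, and therefore $f=\sum_k f_k\varphi_k$ is uniquely determined in $L^2(D)$.

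Second, I would exploit the covariance formula \eqref{Icvar}, which has the form $\text{Cov}(u_k(T,\omega),u_l(T,\omega))=g_kg_l\,I_{kl}$, where $I_{kl}$ is precisely the expectation appearing in Lemma \ref{Lowbnd2}. That lemma yields $I_{kl}\geq C_2>0$, so the product $g_kg_l$ is uniquely determined for every pair $(k,l)$. Taking $k=l$ recovers $g_k^2$, and hence $|g_k|$. By Assumption \ref{assumption} we have $g\not\equiv 0$, so there exists $k_0$ with $g_{k_0}\neq 0$; fixing the sign of $g_{k_0}$ arbitrarily and using $\mathrm{sgn}(g_k)=\mathrm{sgn}(g_kg_{k_0})\cdot\mathrm{sgn}(g_{k_0})$ determines every other $g_k$. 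The two possible choices for $\mathrm{sgn}(g_{k_0})$ produce reconstructions $g$ and $-g$, which yield the same function $|g|$; thus $|g|$ is uniquely determined in $L^2(D)$.

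The main analytic work has in fact already been absorbed into Lemmas \ref{Lowbnd1} and \ref{Lowbnd2}: the decisive point is the strict positivity of the scalar coefficients multiplying $f_k$ and $g_kg_l$. This relies on the positivity and monotonicity of $t^{\alpha-1}E_{\alpha,\alpha}(-\lambda_k t^\alpha)$ from Lemma \ref{Epositive}, on Assumption \ref{assumption} providing the lower bound $h\geq c_h$, and, for the covariance, on a separate treatment of the two regimes $H\in(0,\tfrac12)$ and $H\in(\tfrac12,1)$ via the kernel representations of the fBm stochastic integral. Once these ingredients are in place, the uniqueness step itself reduces to a short division-based argument, together with the observation that the residual global sign ambiguity inherent in any reconstruction from second-order statistics is harmless for $|g|$.
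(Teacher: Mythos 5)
Your proposal is correct and follows essentially the same route as the paper: divide \eqref{Iexp} by the strictly positive factor from Lemma \ref{Lowbnd1} to get $f_k$, and divide \eqref{Icvar} by the strictly positive $I_{kl}$ from Lemma \ref{Lowbnd2} to get all products $g_kg_l$. The only cosmetic difference is that the paper recovers $|g|$ by assembling $g^2(x)=\sum_{k,l}g_kg_l\varphi_k(x)\varphi_l(x)$ directly, whereas you propagate signs from a nonzero coefficient to pin down $g$ up to a global sign; both are valid and equivalent.
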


\begin{proof}
 Since $f, g\in L^2(D)$, we have
 \[
  f(x)=\sum_{k=1}^\infty f_k \varphi_k(x),\quad  g(x)=\sum_{k=1}^\infty g_k
\varphi_k(x),
 \]
which gives that
\[
 g^2(x)=\left( \sum_{k=1}^\infty g_k
\varphi_k(x)\right) \left( \sum_{l=1}^\infty g_l
\varphi_l(x)\right)=\sum_{k,l\in\mathbb N} g_k g_l\varphi_k(x)\varphi_l(x).
\]
By Lemmas \ref{Lowbnd1} and \ref{Lowbnd2}, the proof is completed by noting \eqref{Iexp} and \eqref{Icvar}.
\end{proof}

Unfortunately, the inverse source problem is unstable. In \cite[Lemma
4.4]{Niu+2018}, the authors have explained the instability for $H=\frac12,
\alpha\in(\frac12,1)$. Since the formula (\ref{Iexp}) does not involve the
Brownian motion, the instability of recovering $f$ is the same. Therefore, we
shall only discuss the instability of recovering $|g|$. It suffices to show
that it is unstable to recover $g_k^2$ in \eqref{Icvar} when $k=l$, i.e., we
shall focus on the estimate of \eqref{Ivar}.

First, we choose $t_*$ small enough such that
\begin{align}\label{t*}
\frac1{1+\lambda_kr^\alpha}\le
\begin{cases}
1 &\quad\text{if} ~ r<t_*,\\
\frac1{\lambda_kr^{\alpha}} &\quad\text{if} ~ r>t_*.
\end{cases}
\end{align}
Below we discuss the two different cases $H\in(0,\frac{1}{2})$ and
$H\in(\frac{1}{2}, 1)$, separately.

\subsection{The case $H\in(0,\frac{1}{2})$}

We consider the estimate \eqref{eq:moment} with $t=T$ and estimate
$I_j(T), j=1, 2, 3$.

The estimate $I_1(T)$. A simple calculation yields
\begin{align*}
I_1(T)=&\int_0^T\left[\left(\frac
{T}{\tau}\right)^{H-\frac12}(T-\tau)^{\alpha+H-\frac32}E_{\alpha,\alpha}(-
\lambda_k(T-\tau)^{\alpha})\right]^2d\tau\\
=&\left(\int_0^{T-t_*}+\int_{T-t_*}^T\right)\left(\left(\frac {T}{\tau}\right)^{2H-1}(T-\tau)^{2\alpha+2H-3}E_{\alpha,\alpha}^2(-\lambda_k(T-\tau)^{\alpha})\right)d\tau.
\end{align*}
We have from (\ref{t*}) that
\begin{align*}
&\int_0^{T-t_*}\left(\frac {T}{\tau}\right)^{2H-1}(T-\tau)^{2\alpha+2H-3}E_{\alpha,\alpha}^2(-\lambda_k(T-\tau)^{\alpha})\,d\tau\\
\leq&\int_0^{T-t_*}\left(\frac {T}{\tau}\right)^{2H-1}(T-\tau)^{2\alpha+2H-3}\frac{1}{\lambda_k^2(T-\tau)^{2\alpha}}\,d\tau\\
=&\frac{1}{\lambda_k^2}\int_0^{T-t_*}\left(\frac {T}{\tau}\right)^{2H-1}(T-\tau)^{2H-3}\,d\tau.
\end{align*}
The condition $H>0$ is enough to ensure the convergence of the above singular
integral. Moreover, it follows from the binomial expansion that we obtain
\begin{align*}
&\frac{1}{\lambda_k^2}\int_0^{T-t_*}\left(\frac {T}{\tau}\right)^{2H-1}(T-\tau)^{2H-3}\,d\tau\\
=&\frac{T^{4H-4}}{\lambda_k^2}\int_0^{T-t_*}\tau^{1-2H}(1-\frac{\tau}{T})^{2H-3}\,d\tau\\
=&\frac{T^{4H-4}}{\lambda_k^2}\int_0^{T-t_*}\tau^{1-2H}\left(\sum_{n=0}^{\infty}\left(\begin{array}{c}
2H-3\\
n
\end{array}\right)(-1)^n\left(\frac{1}{T}\right)^n\tau^n\right)d\tau\\
=&\frac{T^{4H-4}}{\lambda_k^2}\sum_{n=0}^{\infty}\left(\begin{array}{c}
2H-3\\
n
\end{array}\right)(-1)^n\left(\frac{1}{T}\right)^n\int_0^{T-t_*}\tau^{n+1-2H}d\tau\\
=&\frac{T^{4H-4}}{\lambda_k^2}\sum_{n=0}^{\infty}\left(\begin{array}{c}
2H-3\\
n
\end{array}\right)(-1)^n\left(\frac{1}{T}\right)^n\frac{(T-t_*)^{n+2-2H}}{n+2-2H}\\
=&\frac{T^{4H-4}}{\lambda_k^2}(T-t_*)^{2-2H}\sum_{n=0}^{\infty}\left(\begin{array}{c}
2H-3\\
n
\end{array}\right)\frac{(-1)^n}{n+2-2H}\left(\frac{T-t_*}{T}\right)^n\\
\lesssim&\frac{1}{\lambda_k^2}T^{4H-4}(T-t_*)^{2-2H}
\leq\frac{1}{\lambda_k^2}T^{2H-2}.
\end{align*}
On the other hand, by (\ref{t*}), there holds
\begin{align*}
&\int_{T-t_*}^T\left(\frac {T}{\tau}\right)^{2H-1}(T-\tau)^{2\alpha+2H-3}E_{\alpha,\alpha}^2(-\lambda_k(T-\tau)^{\alpha})d\tau.\\
\lesssim&\int_{T-t_*}^T\left(\frac {T}{\tau}\right)^{2H-1}(T-\tau)^{2\alpha+2H-3}d\tau.
\end{align*}
Clearly, this singular integral needs the condition $\alpha+H>1$ to guarantee
the convergence. By the mean value theorem for the definite integral,
there exists $T-t_*<\xi<T$ such that
\begin{align*}
&\int_{T-t_*}^T\left(\frac {T}{\tau}\right)^{2H-1}(T-\tau)^{2\alpha+2H-3}d\tau\\
=&\left(\frac{T}{\xi}\right)^{2H-1}\int_{T-t_*}^T(T-\tau)^{2\alpha+2H-3}d\tau\\
=&\left(\frac{T}{\xi}\right)^{2H-1}\frac{t_*^{2\alpha+2H-2}}{2\alpha+2H-2}
\lesssim t_*^{2\alpha+2H-2}.
\end{align*}
Combining the above estimate leads to
\begin{equation}\label{ITe}
I_1(T)\lesssim\frac{1}{\lambda_k^2}T^{2H-2}+t_*^{2\alpha+2H-2}.
\end{equation}

The estimate of $I_2(T)$. Using Lemma \ref{MLinq} and (\ref{t*}), we
have
\begin{align}\label{IIT}
I_2(T)=&\int_0^T{\tau}^{1-2H}\left[\left(\int_\tau^Tu^{H-\frac32}(u-\tau)^{H-
\frac12}du\right)(T-\tau)^{\alpha-1}E_{\alpha,\alpha}(-\lambda_k(T-\tau)^{\alpha
}
)\right]^2d\tau\nonumber\\
\lesssim&\int_0^T{\tau}^{1-2H}\left(\int_\tau^Tu^{H-\frac32}(u-\tau)^{H-\frac12}du\right)^2(T-\tau)^{2\alpha-2}\left(\frac{1}{1+\lambda_k(T-\tau)^{\alpha}}\right)^2d\tau\nonumber\\
=&\int_0^{T-t_*}{\tau}^{1-2H}\left(\int_\tau^{T-t_*}u^{H-\frac32}(u-\tau)^{H-\frac12}du\right)^2(T-\tau)^{2\alpha-2}\left(\frac{1}{1+\lambda_k(T-\tau)^{\alpha}}\right)^2d\tau\nonumber\\
&+\int_0^{T-t_*}{\tau}^{1-2H}\left(\int_{T-t_*}^Tu^{H-\frac32}(u-\tau)^{H-\frac12}du\right)^2(T-\tau)^{2\alpha-2}\left(\frac{1}{1+\lambda_k(T-\tau)^{\alpha}}\right)^2d\tau\nonumber\\
&+\int_{T-t_*}^T{\tau}^{1-2H}\left(\int_{\tau}^Tu^{H-\frac32}(u-\tau)^{H-\frac12}du\right)^2(T-\tau)^{2\alpha-2}\left(\frac{1}{1+\lambda_k(T-\tau)^{\alpha}}\right)^2d\tau\nonumber\\
\lesssim&\int_0^{T-t_*}{\tau}^{1-2H}\left(\int_\tau^{T-t_*}u^{H-\frac32}(u-\tau)^{H-\frac12}du\right)^2(T-\tau)^{2\alpha-2}\frac{1}{\lambda_k^2(T-\tau)^{2\alpha}}d\tau\nonumber\\
&+\int_0^{T-t_*}{\tau}^{1-2H}\left(\int_{T-t_*}^Tu^{H-\frac32}(u-\tau)^{H-\frac12}du\right)^2(T-\tau)^{2\alpha-2}\frac{1}{\lambda_k^2(T-\tau)^{2\alpha}}d\tau\nonumber\\
&+\int_{T-t_*}^T{\tau}^{1-2H}\left(\int_{\tau}^Tu^{H-\frac32}(u-\tau)^{H-\frac12}du\right)^2(T-\tau)^{2\alpha-2}d\tau\nonumber\\
=:&J_1(T)+J_2(T)+J_3(T).
\end{align}
Next we estimate $J_j(T), j=1, 2, 3$, respectively.

For the term $J_1(T)$, we get
\begin{align}\label{IIT11}
J_1(T)=&\frac{1}{\lambda_k^2}\int_0^{T-t_*}{\tau}^{1-2H}(T-\tau)^{-2}
\left(\int_\tau^{T-t_*}u^{H-\frac32}(u-\tau)^{H-\frac12}
du\right)^2d\tau\nonumber\\
=&\frac{1}{\lambda_k^2}\int_0^{T-t_*}{\tau}^{1-2H}(T-\tau)^{-2}\left[\int_\tau^{T-t_*}u^{2H-2}\sum_{n=0}^{\infty}\left(
\begin{array}{c}
H-\frac12\\
n
\end{array}\right)(-1)^nu^{-n}\tau^ndu\right]^2d\tau\nonumber\\
=&\frac{1}{\lambda_k^2}\int_0^{T-t_*}{\tau}^{1-2H}(T-\tau)^{-2}\left[\sum_{n=0}^{\infty}\left(
\begin{array}{c}
H-\frac12\\
n
\end{array}\right)(-1)^n\tau^n\int_\tau^{T-t_*}u^{2H-2-n}du\right]^2d\tau\nonumber\\
=&\frac{1}{\lambda_k^2}\int_0^{T-t_*}{\tau}^{1-2H}(T-\tau)^{-2}\bigg[(T-t_*)^{
2H-1 } \sum_{n=0}^{\infty}\left(
\begin{array}{c}
H-\frac12\\
n
\end{array}\right)\frac{(-1)^n}{2H-1-n}\left(\frac{\tau}{T-t_*}\right)^n\nonumber\\
&-\tau^{2H-1}\sum_{n=0}^{\infty}\left(
\begin{array}{c}
H-\frac12\\
n
\end{array}\right)\frac{(-1)^n}{2H-1-n}\bigg]^2d\tau\nonumber\\
\leq&\frac{1}{\lambda_k^2}\int_0^{T-t_*}{\tau}^{1-2H}(T-\tau)^{-2}\left[(T-t_*)^{4H-2}+\tau^{4H-2}\right]d\tau
\nonumber\\
=&\frac{1}{\lambda_k^2}\left((T-t_*)^{4H-2}\int_0^{T-t_*}{\tau}^{1-2H}(T-\tau)^{
-2}d\tau+\int_0^{T-t_*}{\tau}^{2H-1}(T-\tau)^{-2}d\tau\right),
\end{align}
where the condition $0<H<\frac12$ is used to make the above singular
integrals convergent. Hence
\begin{align}\label{IIT12}
J_1(T)\lesssim&\frac{1}{\lambda_k^2}\left((T-t_*)^{4H-2}\int_0^{T-t_*}{\tau}^{
1-2H}(T-\tau)^{-2}d\tau+\int_0^{T-t_*}{\tau}^{2H-1}(T-\tau)^{-2}
d\tau\right)\nonumber\\
=&\frac{1}{\lambda_k^2}\bigg[(T-t_*)^{4H-2}T^{-2}\int_0^{T-t_*}\tau^{1-2H}
\left(\sum_{n=0}^{\infty}\left(
\begin{array}{c}
-2\\
n
\end{array}\right)(-1)^nT^{-n}\tau^n\right)d\tau\nonumber\\
&+T^{-2}\int_0^{T-t_*}\tau^{2H-1}\left(\sum_{n=0}^{\infty}\left(
\begin{array}{c}
-2\\
n
\end{array}\right)(-1)^nT^{-n}\tau^n\right)d\tau\bigg]\nonumber\\
=&\frac{1}{\lambda_k^2}\bigg[T^{-2}(T-t_*)^{2H}\sum_{n=0}^{\infty}\left(
\begin{array}{c}
-2\\
n
\end{array}\right)\frac{(-1)^n}{2-2H+n}\left(\frac{T-t_*}{T}\right)^n\nonumber\\
&+T^{-2}(T-t_*)^{2H}\sum_{n=0}^{\infty}\left(
\begin{array}{c}
-2\\
n
\end{array}\right)\frac{(-1)^n}{2H+n}\left(\frac{T-t_*}{T}\right)^n\bigg]
\lesssim \frac{1}{\lambda_k^2}T^{2H-2}.
\end{align}

For the second term $J_2(T)$ in (\ref{IIT}), we have
\begin{align}\label{IIT2}
J_2(T)=&\frac{1}{\lambda_k^2}\int_0^{T-t_*}{\tau}^{1-2H}(T-\tau)^{-2}
\left(\int_{T-t_*}^Tu^{H-\frac32}(u-\tau)^{H-\frac12}du\right)^2d\tau\nonumber\\
=&\frac{1}{\lambda_k^2}\int_0^{T-t_*}{\tau}^{1-2H}(T-\tau)^{-2}\left[\sum_{n=0}^{\infty}\left(
\begin{array}{c}
H-\frac12\\
n
\end{array}\right)(-1)^n\tau^n\int_{T-t_*}^Tu^{2H-2-n}du\right]^2d\tau\nonumber\\
=&\frac{1}{\lambda_k^2}\int_0^{T-t_*}{\tau}^{1-2H}(T-\tau)^{-2}\bigg[T^{2H-1}
\sum_ { n=0}^{\infty}\left(
\begin{array}{c}
H-\frac12\\
n
\end{array}\right)\frac{(-1)^n}{2H-1-n}\left(\frac{\tau}{T}\right)^n\nonumber\\
&-(T-t_*)^{2H-1}\sum_{n=0}^{\infty}\left(
\begin{array}{c}
H-\frac12\\
n
\end{array}\right)\frac{(-1)^n}{2H-1-n}\left(\frac{\tau}{T-t_*}\right)^n\bigg]
^2d\tau\nonumber\\
\lesssim&\frac{1}{\lambda_k^2}\int_0^{T-t_*}{\tau}^{1-2H}(T-\tau)^{-2}\left[T^{4H-2}+(T-t_*)^{4H-2}\right]d\tau\nonumber\\
=&\frac{1}{\lambda_k^2}\left[T^{4H-2}+(T-t_*)^{4H-2}\right]T^{-2}(T-t_*)^{2-2H}\sum_{n=0}^{\infty}\left(
\begin{array}{c}
-2\\
n
\end{array}\right)\frac{(-1)^n}{2-2H+n}\left(\frac{T-t_*}{T}\right)^n\nonumber\\
\lesssim&\frac{1}{\lambda_k^2}T^{2H-2}.
\end{align}
In (\ref{IIT2}), we have used the same tricks as those in (\ref{IIT11}) and
(\ref{IIT12}).

For the third term $J_3(T)$, we obtain
\begin{align*}
J_3(T)=&\int_{T-t_*}^T{\tau}^{1-2H}(T-\tau)^{2\alpha-2}\left(\int_{\tau}^Tu^{
H-\frac32}(u-\tau)^{H-\frac12}du\right)^2d\tau\\
=&\int_{T-t_*}^T{\tau}^{1-2H}(T-\tau)^{2\alpha-2}\left(\sum_{n=0}^{\infty}\left(
\begin{array}{c}
H-\frac12\\
n
\end{array}\right)(-1)^n\tau^n\int_{\tau}^Tu^{2H-2-n}du\right)^2d\tau\\
=&\int_{T-t_*}^T{\tau}^{1-2H}(T-\tau)^{2\alpha-2}\bigg(T^{2H-1}\sum_{n=0}^{
\infty } \left(
\begin{array}{c}
H-\frac12\\
n
\end{array}\right)\frac{(-1)^n}{2H-1-n}\left(\frac{\tau}{T}\right)^n\\
&-\tau^{2H-1}\sum_{n=0}^{\infty}\left(
\begin{array}{c}
H-\frac12\\
n
\end{array}\right)\frac{(-1)^n}{2H-1-n}\bigg)^2d\tau\\
\lesssim&\int_{T-t_*}^T{\tau}^{1-2H}(T-\tau)^{2\alpha-2}\left(T^{4H-2}+\tau^{
4H-2}\right)d\tau\\
=&T^{4H+2\alpha-4}\int_{T-t_*}^T{\tau}^{1-2H}\left(\sum_{n=0}^{\infty}\left(
\begin{array}{c}
2\alpha-2\\
n
\end{array}\right)(-1)^nT^{-n}\tau^n\right)d\tau\\
&+T^{2\alpha-2}\int_{T-t_*}^T{\tau}^{2H-1}\left(\sum_{n=0}^{\infty}\left(
\begin{array}{c}
2\alpha-2\\
n
\end{array}\right)(-1)^nT^{-n}\tau^n\right)d\tau\\
=&T^{4H+2\alpha-4}\sum_{n=0}^{\infty}\left(
\begin{array}{c}
2\alpha-2\\
n
\end{array}\right)(-1)^nT^{-n}\frac{T^{2-2H+n}-(T-t_*)^{2-2H+n}}{2-2H+n}\\
&+T^{2\alpha-2}\sum_{n=0}^{\infty}\left(
\begin{array}{c}
2\alpha-2\\
n
\end{array}\right)(-1)^nT^{-n}\frac{T^{2H+n}-(T-t_*)^{2H+n}}{2H+n}.
\end{align*}
Noting the range of $H$, we can use the differential mean value theorem
and the H\"{o}lder continuity of $x^{2H}$ to obtain
\begin{align}\label{IIT32}
J_2(T)\lesssim T^{2H+2\alpha-3}t_*+T^{2\alpha-2}(t_*^{2H}\vee
t_*(T-t_*)^{2H-1}).
\end{align}
Combining (\ref{IIT}), (\ref{IIT12}), (\ref{IIT2}), and (\ref{IIT32}), we have
\begin{align}\label{IITe}
I_2(T)\lesssim \lambda_k^{-2}+(t_*^{2H}\vee t_*).
\end{align}

The estimate of $I_3(T)$. According to Lemma \ref{lm:tE},
\begin{align*}
I_3(T)\lesssim&\int_0^T\left[\int_\tau^T\left[\int_{T-u}^{T-\tau}\frac{r^{
\alpha-2 } }{1+\lambda_kr^{\alpha}}dr\right]\left(\frac
u\tau\right)^{H-\frac12}(u-\tau)^{H-\frac32}du\right]^2d\tau\\
\lesssim&\int_0^{T-t_*}\left[\int_\tau^{T-t_*}\left[\int_{T-u}^{T-\tau}\frac{r^{\alpha-2}}{1+\lambda_kr^{\alpha}}dr\right]\left(\frac u\tau\right)^{H-\frac12}(u-\tau)^{H-\frac32}du\right]^2d\tau\\
&+\int_0^{T-t_*}\left[\int_{T-t_*}^T\left[\int_{T-u}^{T-\tau}\frac{r^{\alpha-2}}{1+\lambda_kr^{\alpha}}dr\right]\left(\frac u\tau\right)^{H-\frac12}(u-\tau)^{H-\frac32}du\right]^2d\tau\\
&+\int_{T-t_*}^T\left[\int_\tau^T\left[\int_{T-u}^{T-\tau}\frac{r^{\alpha-2}}{1+\lambda_kr^{\alpha}}dr\right]\left(\frac u\tau\right)^{H-\frac12}(u-\tau)^{H-\frac32}du\right]^2d\tau\\
=&:K_1(T)+K_2(T)+K_3(T).
\end{align*}
Next is to estimate $K_j(T), j=1, 2, 3.$

For $K_1(T)$, a simple calculation gives
\begin{align*}
K_1(T)\lesssim&\int_0^{T-t_*}\left[\int_\tau^{T-t_*}\left[\frac1{\lambda_k}
\int_{T-u}^{T-\tau}r^{-2}dr\right]\left(\frac
u\tau\right)^{H-\frac12}(u-\tau)^{H-\frac32}du\right]^2d\tau\\
\lesssim&\frac1{\lambda_k^2}\int_0^{T-t_*}\left[\int_\tau^{T-t_*}(T-u)^{-2}(u-\tau)^{H-\frac12}\left(\frac u\tau\right)^{H-\frac12}du\right]^2d\tau.
\end{align*}
Noting $H\in(0,\frac12)$, we have
\begin{align*}
K_1(T)\lesssim&\frac1{\lambda_k^2}\int_0^{T-t_*}\left[\int_0^{T-t_*-\tau}
(T-\tau-r)^{-2}r^{H-\frac12}dr\right]^2d\tau\\
\lesssim&\frac1{\lambda_k^2}\int_0^{T-t_*}(T-\tau)^{-4}\left[\sum_{n=0}^{\infty}
\left(\begin{array}{c}
-2\\
n
\end{array}\right)(-1)^n(T-\tau)^{-n}\int_0^{T-t_*-\tau}r^{n+H-\frac12}dr\right]^2d\tau\\
=&\frac1{\lambda_k^2}\int_0^{T-t_*}(T-\tau)^{-4}(T-t_*-\tau)^{2H+1}d\tau\left[\sum_{n=0}^{\infty}
\left(\begin{array}{c}
-2\\
n
\end{array}\right)(-1)^n\frac{(T-t_*-\tau)^n}{(T-\tau)^n}\frac1{n+H+\frac12}\right]^2\\
\lesssim&\frac1{\lambda_k^2}\int_0^{T-t_*}(T-\tau)^{2H-3}d\tau
\lesssim\frac1{\lambda_k^2t_*^{2-2H}}.
\end{align*}

For $K_2(T)$, noting that $t_*<T-\tau<T$ and $0<T-u<t_*$ since $0<\tau<T-t_*$
and $T-t_*<u<T$, we get
\begin{align*}
\int_{T-u}^{T-\tau}\frac{r^{\alpha-2}}{1+\lambda_kr^{\alpha}}dr
\le&\int_{T-u}^{t_*}r^{\alpha-2}dr+\int_{t_*}^{T-\tau}\frac1{\lambda_k}r^{-2}dr\\
\le&\left(t_*^\alpha\vee\frac1{\lambda_k}\right)\int_{T-u}^{T-\tau}r^{-2}dr\\
\le&\left(t_*^\alpha\vee\frac1{\lambda_k}\right)(T-u)^{-2}(u-\tau).
\end{align*}
As a result, we have for $H\in(0,\frac12)$ that
\begin{align*}
K_2(T)\lesssim&\int_0^{T-t_*}\left[\int_{T-t_*}^T\left(t_*^\alpha\vee\frac1{
\lambda_k}\right)(T-u)^{-2}(u-\tau)^{H-\frac12}\left(\frac
u\tau\right)^{H-\frac12}du\right]^2d\tau\\\lesssim&\left(t_*^{2\alpha}\vee\frac1
{\lambda_k^2}\right)\int_0^{T-t_*}\left[\int_{T-t_*-\tau}^{T-\tau}(T-\tau-r)^{-2
}r^{H-\frac12}dr\right]^2d\tau\\
\lesssim&\left(t_*^{2\alpha}\vee\frac1{\lambda_k^2}\right)\int_0^{T-t_*}(T-\tau)^{-4}\left[\sum_{n=0}^{\infty}
\left(\begin{array}{c}
-2\\
n
\end{array}\right)(-1)^n(T-\tau)^{-n}\int_{T-t_*-\tau}^{T-\tau}r^{n+H-\frac12}dr\right]^2d\tau\\
\lesssim&\left(t_*^{2\alpha}\vee\frac1{\lambda_k^2}\right)\int_0^{T-t_*}(T-\tau)^{2H-3}d\tau\\
\lesssim&\left(t_*^{2\alpha+2H-2}\vee\frac1{\lambda_k^2t_*^{2-2H}}\right).
\end{align*}

For $K_3(T)$, since $0<T-u<T-\tau<t_*$ for $T-t_*<\tau<u<T$,
\begin{align*}
K_3(T)\lesssim&\int_{T-t_*}^T\left[\int_\tau^T\left[\int_{T-u}^{T-\tau}r^{
\alpha-2}dr\right]\left(\frac
u\tau\right)^{H-\frac12}(u-\tau)^{H-\frac32}du\right]^2d\tau\\
\lesssim&\int_{T-t_*}^T\left[\int_\tau^T(T-u)^{\alpha-2}(u-\tau)^{H-\frac12}\left(\frac u\tau\right)^{H-\frac12}du\right]^2d\tau\\
\lesssim&\int_{T-t_*}^T\left[\int_0^{T-\tau}(T-\tau-r)^{\alpha-2}r^{H-\frac12}
dr\right]^2d\tau\\
\lesssim&\int_{T-t_*}^T(T-\tau)^{2\alpha+2H-3}d\tau
\lesssim t_*^{2\alpha+2H-2},
\end{align*}
where we have used the condition $\alpha+H>1$ again.

Combining the above estimates, we conclude that
\begin{equation}\label{IIITe}
I_3(T)\lesssim\left(t_*^{2\alpha+2H-2}\vee\frac1{\lambda_k^2t_*^{2-2H}}\right).
\end{equation}

Finally, it follows from (\ref{ITe})--(\ref{IIITe}) that we obtain
\begin{align}\label{eq:IllH012}
&\mathbb{E}\left|\int_0^T(T-\tau)^{\alpha-1}E_{\alpha,\alpha}(-\lambda_k(T-\tau)^{\alpha})dB^H(\tau)\right|^2\nonumber\\
\lesssim&\max\left\{t_*^{2\alpha+2H-2}, \lambda_k^{-2}t_*^{2-2H}, \lambda_k^{-2}, t_*^{2H}, t_*\right\}.
\end{align}

\subsection{The case $H\in(\frac{1}{2},1)$}

Set $\tilde{p}=T-p, \tilde{q}=T-q$. From (\ref{Ivar}) and (\ref{V121}), we have
\begin{align}\label{vTe}
&\mathbb{E}\left|\int_0^T(T-\tau)^{\alpha-1}E_{\alpha,\alpha}(-\lambda_k(T-\tau)^{\alpha})dB^H(\tau)\right|^2\nonumber\\
=&\alpha_H\int_0^T\int_0^T(T-p)^{\alpha-1}E_{\alpha,\alpha}(-\lambda_k(T-p)^{\alpha})(T-q)^{\alpha-1}E_{\alpha,\alpha}(-\lambda_k(T-q)^{\alpha})|p-q|^{2H-2}dpdq\nonumber\\
=&\alpha_H\int_0^T\int_0^T\tilde{p}^{\alpha-1}E_{\alpha,\alpha}(-\lambda_k\tilde{p}^{\alpha})\tilde{q}^{\alpha-1}E_{\alpha,\alpha}(-\lambda_k\tilde{q}^{\alpha})|\tilde{q}-\tilde{p}|^{2H-2}d\tilde{p}d\tilde{q}\nonumber\\
=&\alpha_H\left(\int_0^{t_*}\int_0^{t_*}+\int_{t_*}^T\int_{t_*}^T+\int_0^{t_*}\int_{t_*}^T+\int_{t_*}^T\int_0^{t_*}\right)\tilde{p}^{\alpha-1}E_{\alpha,\alpha}(-\lambda_k\tilde{p}^{\alpha})\tilde{q}^{\alpha-1}E_{\alpha,\alpha}(-\lambda_k\tilde{q}^{\alpha})|\tilde{q}-\tilde{p}|^{2H-2}d\tilde{p}d\tilde{q}\nonumber\\
=:&\alpha_H(M_1(T)+M_2(T)+M_3(T)+M_4(T)).
\end{align}
We choose $t_*$ as that in (\ref{t*}). It is easy to see that
$M_3(T)=M_4(T)$. Then we only need to discuss $M_j(T), j=1, 2, 3.$

For the term $M_1(T)$, we can use the same analysis in Subsection
\ref{subsec3.2} to get
\begin{align}\label{M1T}
M_1(T)=&\int_0^{t_*}\int_0^{t_*}\tilde{p}^{\alpha-1}E_{\alpha,\alpha}
(-\lambda_k\tilde{p}^{\alpha})\tilde{q}^{\alpha-1}E_{\alpha,\alpha}
(-\lambda_k\tilde{q}^{\alpha})|\tilde{q}-\tilde{p}|^{2H-2}d\tilde{p}d\tilde{q}
\nonumber\\
\lesssim&\int_0^{t_*}\int_0^{t_*}\tilde{p}^{\alpha-1}\tilde{q}^{\alpha-1}|\tilde{q}-\tilde{p}|^{2H-2}d\tilde{p}d\tilde{q}\qquad(\text{noting}\, \alpha>0)\nonumber\\
\lesssim&t_*^{2\alpha+2H-2}.
\end{align}
For the term $M_2(T)$, it is easy to verify that
\begin{align}\label{M2T}
M_2(T)=&\int_{t_*}^T\int_{t_*}^T\tilde{p}^{\alpha-1}E_{\alpha,\alpha}(-\lambda_k\tilde{p}^{\alpha})\tilde{q}^{\alpha-1}E_{\alpha,\alpha}(-\lambda_k\tilde{q}^{\alpha})|\tilde{q}-\tilde{p}|^{2H-2}d\tilde{p}d\tilde{q}\nonumber\\
\lesssim&\frac{1}{\lambda_k^2}\int_{t_*}^T\int_{t_*}^T\frac{1}{\tilde{p}}\frac{1}{\tilde{q}}|\tilde{q}-\tilde{p}|^{2H-2}d\tilde{p}d\tilde{q}\nonumber\\
\leq&\frac{1}{\lambda_k^2t_*^2}\int_{t_*}^T\int_{t_*}^T|\tilde{q}-\tilde{p}|^{2H-2}d\tilde{p}d\tilde{q}\nonumber\\
=&\frac{1}{\lambda_k^2t_*^2}\left(\int_{t_*}^T\int_{t_*}^{\tilde{q}}(\tilde{q}-\tilde{p})^{2H-2}d\tilde{p}d\tilde{q}+\int_{t_*}^T\int^T_{\tilde{q}}(\tilde{p}-\tilde{q})^{2H-2}d\tilde{p}d\tilde{q}\right)\nonumber\\
=&\frac{2}{\lambda_k^2t_*^2}\int_{t_*}^T\int_{t_*}^{\tilde{q}}(\tilde{q}-\tilde{p})^{2H-2}d\tilde{p}d\tilde{q}\qquad (\text{noting}\, H>\frac12)\nonumber\\
=&\frac{2}{\lambda_k^2t_*^2}\int_{t_*}^T\frac{(\tilde{q}-t_*)^{2H-1}}{2H-1}d\tilde{q}\nonumber\\
=&\frac{2}{\lambda_k^2t_*^2}\frac{(T-t_*)^{2H}}{2H(2H-1)}
\lesssim \frac{1}{\lambda_k^2t_*^2}.
\end{align}
For the term $M_3(T)$, we may similarly have
\begin{align}\label{M3T}
M_3(T)=&\int_0^{t_*}\int_{t_*}^T\tilde{p}^{\alpha-1}E_{\alpha,\alpha}
(-\lambda_k\tilde{p}^{\alpha})\tilde{q}^{\alpha-1}E_{\alpha,\alpha}
(-\lambda_k\tilde{q}^{\alpha})|\tilde{q}-\tilde{p}|^{2H-2}d\tilde{p}d\tilde{q}
\nonumber\\
\lesssim&\frac{1}{\lambda_k}\int_0^{t_*}\int_{t_*}^T\frac{1}{\tilde{p}}\tilde{q}^{\alpha-1}(\tilde{p}-\tilde{q})^{2H-2}d\tilde{p}d\tilde{q}\nonumber\\
\leq&\frac{1}{\lambda_kt_*}\int_0^{t_*}\int_{t_*}^T\tilde{q}^{\alpha-1}(\tilde{p}-\tilde{q})^{2H-2}d\tilde{p}d\tilde{q}\nonumber\\
=&\frac{1}{\lambda_kt_*}\int_0^{t_*}\tilde{q}^{\alpha-1}\frac{(T-\tilde{q})^{2H-1}-(t_*-\tilde{q})^{2H-1}}{2H-1}d\tilde{q}\qquad (\text{noting}\, H>\frac12)\nonumber\\
\lesssim&\frac{1}{\lambda_kt_*}\int_0^{t_*}\tilde{q}^{\alpha-1}(T-\tilde{q})^{2H-1}d\tilde{q}\nonumber\\
\lesssim&\frac{1}{\lambda_kt_*}\int_0^{t_*}\tilde{q}^{\alpha-1}d\tilde{q}
\lesssim \frac{1}{\lambda_k}t_*^{\alpha-1}.
\end{align}

It follows from (\ref{vTe})--(\ref{M3T}) that we obtain the estimate
\begin{align}\label{eq:IllH121}
\mathbb{E}\left|\int_0^T(T-\tau)^{\alpha-1}E_{\alpha,\alpha}(-\lambda_k(T-\tau)^
{\alpha})dB^H(\tau)\right|^2
\lesssim \max\left\{t_*^{2\alpha+2H-2},\frac{1}{\lambda_k^2t_*^2},\frac{1}{
\lambda_k}t_*^{\alpha-1}\right\},
\end{align}
which is crucial to explain the instability of the inverse problem.

\subsection{Instability}

Based on the analysis above, we can obtain the following theorem which shows
that it is unstable to reconstruct $f$ and $|g|$.

\begin{theorem} The problem of recovering the source terms $f$ and $|g|$ is
unstable. Moreover, the following estimates hold
\begin{equation}\label{eq:ille1}
\left|\int_0^T(T-\tau)^{\alpha-1}E_{\alpha,\alpha}(-\lambda_k(T-\tau)^{\alpha}
)h(\tau)d\tau\right|\lesssim\lambda_k^{-1}
\end{equation}
and
\begin{equation}\label{eq:ille2}
\mathbb{E}\left|\int_0^T(T-\tau)^{\alpha-1}E_{\alpha,\alpha}(-\lambda_k(T-\tau)^
{\alpha})dB^H(\tau)\right|^2\lesssim\lambda_k^{-\beta},
\end{equation}
where
\begin{equation*}
\beta=\begin{cases}
\min\left\{2\gamma(\alpha+H-1), 2-2\gamma(H-1), 2H, \gamma\right\}, & 
0<H<\frac{1}{2},\\
\min\left\{2\gamma(\alpha+H-1), 2(1-\gamma), 1-\gamma(1-\alpha)\right\}, & 
\frac{1}{2}<H<1,
\end{cases}\quad 0<\gamma<1,\,\alpha+H>1
\end{equation*}
and
\[
 \beta=\min\left\{\gamma(2\alpha-1), 1-\gamma\right\},\quad
H=\frac{1}{2},\,0<\gamma<1,\,\alpha>\frac{1}{2}.
\]
\end{theorem}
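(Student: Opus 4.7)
The proof splits into three parts: the deterministic estimate \eqref{eq:ille1}, the stochastic estimate \eqref{eq:ille2}, and the instability conclusion.

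For \eqref{eq:ille1}, rather than the crude bound $|E_{\alpha,\alpha}(-\lambda_k s^\alpha)|\leq C/(1+\lambda_k s^\alpha)$ from Lemma \ref{MLinq} (which would cost a $\log\lambda_k$ factor), I would exploit positivity. After the substitution $s=T-\tau$, Lemma \ref{Epositive} guarantees $E_{\alpha,\alpha}(-\lambda_k s^\alpha)\geq 0$ on $(0,T)$, so
\[
\left|\int_0^T(T-\tau)^{\alpha-1}E_{\alpha,\alpha}(-\lambda_k(T-\tau)^{\alpha})h(\tau)d\tau\right|\leq \|h\|_{L^\infty(0,T)}\int_0^T s^{\alpha-1}E_{\alpha,\alpha}(-\lambda_k s^\alpha)\,ds.
\]
By Lemma \ref{dEalf}, the remaining integral is exactly $\lambda_k^{-1}\bigl(1-E_{\alpha,1}(-\lambda_k T^\alpha)\bigr)$, and Lemma \ref{c.m.} yields $E_{\alpha,1}(-\lambda_k T^\alpha)\in[0,1]$. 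This produces \eqref{eq:ille1} directly.

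For \eqref{eq:ille2}, the main technical work is already in Subsections \ref{subsec3.1}--\ref{subsec3.2}: specializing $t=T$, the estimates \eqref{eq:IllH012} and \eqref{eq:IllH121} (and the parallel analysis from \cite{Niu+2018} for $H=\tfrac12$) express $\mathbb{E}|\int_0^T(T-\tau)^{\alpha-1}E_{\alpha,\alpha}(-\lambda_k(T-\tau)^\alpha)dB^H(\tau)|^2$ as a maximum of terms of the form $\lambda_k^{-b}t_*^{a}$, where $t_*>0$ is the cutoff introduced in \eqref{t*}. The plan is to set $t_*=\lambda_k^{-\gamma}$ with $\gamma\in(0,1)$, which is admissible for all large $k$ since $\lambda_k\to\infty$; each term then becomes $\lambda_k^{-(b+a\gamma)}$, and because the overall upper bound is a maximum, the resulting decay rate is governed by the \emph{minimum} of the exponents $b+a\gamma$. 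Carrying this substitution through, for $H\in(0,\tfrac12)$ the five terms in \eqref{eq:IllH012} translate to the exponents $2\gamma(\alpha+H-1)$, $2-2\gamma(H-1)$, $2$, $2H$, and $\gamma$ listed in the theorem (with the redundant entries dropped from the min), while for $H\in(\tfrac12,1)$ the three terms in \eqref{eq:IllH121} give $2\gamma(\alpha+H-1)$, $2(1-\gamma)$, and $1-\gamma(1-\alpha)$; the case $H=\tfrac12$ is treated identically using the corresponding bound from \cite{Niu+2018}.

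The instability claim is then an immediate corollary of the identities \eqref{Iexp}--\eqref{Ivar}: reconstructing $f_k$ from $\mathbb{E}(u_k(T,\omega))$ amounts to dividing by a coefficient of order $\lambda_k^{-1}$, while reconstructing $g_k^2$ from $\mathbb{V}(u_k(T,\omega))$ amounts to dividing by a coefficient of order at most $\lambda_k^{-\beta}$. Since $\lambda_k\to\infty$, any perturbation of the data in a high mode is amplified by an unbounded factor $\lambda_k$ or $\lambda_k^\beta$, ruling out any continuous-data stability bound in $L^2(D)$. I expect the main obstacle to be not conceptual but organizational: one must methodically track each term in the max on the right-hand sides of \eqref{eq:IllH012}--\eqref{eq:IllH121}, verify that the admissible range $0<\gamma<1$ is compatible with the positivity of every exponent in $\beta$ (using $\alpha+H>1$), and confirm that the choice $t_*=\lambda_k^{-\gamma}$ is consistent with the splitting in \eqref{t*} used to derive those estimates.
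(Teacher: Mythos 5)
Your proposal follows the paper's proof essentially verbatim: the paper likewise obtains \eqref{eq:ille2} by substituting $t_*=\lambda_k^{-\gamma}$ into \eqref{eq:IllH012} and \eqref{eq:IllH121} and deduces instability from $\lambda_k\to\infty$ via \eqref{Iexp}--\eqref{Ivar}, while for \eqref{eq:ille1} it simply cites \cite[Lemma 4.4]{Niu+2018}, and your positivity computation $\int_0^T s^{\alpha-1}E_{\alpha,\alpha}(-\lambda_k s^{\alpha})\,ds=\lambda_k^{-1}\bigl(1-E_{\alpha,1}(-\lambda_k T^{\alpha})\bigr)\le\lambda_k^{-1}$ (Lemmas \ref{Epositive}, \ref{dEalf}, \ref{c.m.}) is the standard way to supply that step without a $\log\lambda_k$ loss. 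One bookkeeping caveat: the entry $t_*^{2H}$ in \eqref{eq:IllH012} actually produces the exponent $2\gamma H$ rather than the $2H$ you (and the theorem) list, but since $\alpha\le 1$ gives $2\gamma H\ge 2\gamma(\alpha+H-1)$, that entry never realizes the minimum and the stated $\beta$ is unaffected.
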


\begin{proof}
For (\ref{eq:ille1}), one can refer to \cite[Lemma 4.4]{Niu+2018}. For
(\ref{eq:ille2}), one can obtain it by choosing $t_*=\lambda_k^{-\gamma},
0<\gamma<1$ in (\ref{eq:IllH012}) and (\ref{eq:IllH121}). Here the case
$H=\frac{1}{2}$ can be seen in \cite[Lemma 4.4]{Niu+2018}.
For $\alpha=1$, one can use $e^{-x}<\frac{1}{1+x}, x\geq0$ to obtain the same
results. Since $\lambda_k\to\infty$ as $k\to\infty$, the instability follows
easily from the estimates \eqref{eq:ille1}--\eqref{eq:ille2} and the
reconstruction formulas
\eqref{Iexp}--\eqref{Ivar}.
\end{proof}

\section{Conclusion}

In this paper, we have studied an inverse random source problem for the
time fractional diffusion equation driven by fractional Brownian motions. By the
analysis, we deduce the relationship of the time fractional order $\alpha$ and
the Hurst index $H$ in the fractional Brownian motion to ensure that the
solution is well-defined for the stochastic time fractional diffusion equation.
We show that the direct problem is well-posed when $\alpha+H>1$ and the inverse
source problem has a unique solution. But the inverse problem is ill-posed in
the sense that a small deviation of the data may lead to a huge error in the
reconstruction.

There are a few related interesting observation. First, if the Laplacian
operator is also fractional, the method can be directly applied and all the
results can be similarly proved. Second, for $1<\alpha\leq2$, the
direct problem can be shown to be well-posed since  Lemma \ref{MLinq} is still
valid. However, the inverse problem may not have a unique solution. The reason
is that Lemma \ref{Epositive} is not true any more for $1<\alpha\leq2$.
Finally, we mention that the numerics needs to be investigated. Clearly, some
regularization techniques are indispensable in order to suppress the
instability of the inverse problem. Another challenge is to how to
compute the integrals efficiently and accurately. We will report the numerical
results elsewhere in the future.

\appendix

\section{Fractional Brownian motion}

In the appendix, we briefly introduce the fractional Brownian motion (fBm) and
present some results which are used in this work.

\subsection{Definition and H\"older continuity}

A one dimensional fractional Brownian motion (fBm) $B^H$ with the Hurst
parameter $H\in(0,1)$ is a centered Gaussian process (i.e., $B^H(0)=0$)
determined by its covariance function
\begin{equation*}
R_H(t,s)=\mathbb{E}[B^H(t)B^H(s)]=\frac{1}{2}\left(t^{2H}+s^{2H}-|t-s|^{2H}\right)
\end{equation*}
for any $s,t\ge0$. In particular, if $H=\frac12$, $B^H$ turns to be the standard
Brownian motion, which is usually denoted by $W$, with covariance function
$R_H(t,s)=t\wedge s$.

The increments of fBms satisfies
\begin{equation*}
\mathbb{E}\left[\left(B^H(t)-B^H(s)\right)\left(B^H(s)-B^H(r)\right)\right]=\frac{1}{2}\left[(t-r)^{2H}-(t-s)^{2H}-(r-s)^{2H}\right]
\end{equation*}
and
\begin{equation*}
\mathbb{E}\left[\left(B^H(t)-B^H(s)\right)^2\right]=(t-s)^{2H}
\end{equation*}
for any $0<r<s<t$.
It then indicates that the increments of $B^H$ in disjoint intervals are
linearly dependent except for the case $H=\frac{1}{2}$, and the increments are
stationary since its moment depends only on the length of the interval.

Based on the moment estimates and the Kolmogorov continuity criterion, it holds
for any $\epsilon>0$ and $s,t\in[0,T]$ that
\[
|B^H(t)-B^H(s)|\le C|t-s|^{H-\epsilon}
\]
almost surely with constant $C$ depending on $\epsilon$ and $T$. That is, $H$ represents the regularity of $B^H$: the trajectories of fractional Brownian motion $B^H$ with Hurst parameter $H\in(0,1)$ are $(H-\epsilon)$-H\"older continuous.

\subsection{Representation of fBm and integration}

For a fractional Brownian motion $B^H$ with $H\in(0,1)$, it has the following Wiener integral representation
\[
B^H(t)=\int_0^tK_H(t,s)dW(s)
\]
with $K_H$ being a square integrable kernel and $W$ being the standard Brownian
motion (i.e., $H=\frac{1}{2}$).

For a fixed interval $[0,T]$, denote by $\mathcal{E}$ the space of step functions on $[0,T]$ and by $\mathcal{H}$ the closure of $\mathcal{E}$ with respect to the product
\[
\langle{\bf 1}_{[0,t]},{\bf 1}_{[0,s]}\rangle_{\mathcal{H}}=R_H(t,s),
\]
where ${\bf 1}_{[0,t]},{\bf 1}_{[0,s]}$ are the characteristic functions.
Define the linear operator $K^*_{H,T}:\mathcal{E}\to L^2(0,T)$ by
\begin{align}\label{eq:KH*}
(K_{H,T}^*\psi)(s)=K_H(T,s)\psi(s)+\int_s^T(\psi(u)-\psi(s))\frac{
\partial K_H(u,s)}{\partial u}du,
\end{align}
where
\[
\frac{\partial K_H(u,s)}{\partial u}=c_H\left(\frac us\right)^{H-\frac12}(u-s)^{H-\frac32}
\]
and $c_H$ is a constant given below depending on $H$. Then $K^*_{H,T}$ is an isometry from $\mathcal{E}$ to $L^2(0,T)$ (see e.g. \cite{Nualart+2006,Tindel+2003}), and the integral with respect to $B^H$ can be defined for functions $\varphi$ satisfying
\[
\|\psi\|_{|\mathcal{H}|}^2:=\langle
\psi,\psi\rangle_{\mathcal{H}}<\infty,
\]
and (see e.g. \cite{Nualart+2006,Tindel+2003})
\begin{align*}
\int_0^t\psi(s)dB^H(s)=&\int_0^T\psi(s){\bf
1}_{[0,t]}(s)dB^H(s)=\int_0^T[K_{H,T}^*(\psi{\bf 1}_{[0,t]})](s)dW(s)
\end{align*}
for any $t\in[0,T]$.
Hence, according to the It\^o isometry,
\begin{align}\label{eq:inte}
\mathbb{E}\left[\int_0^t\psi(s)dB^H(s)\int_0^t\phi(s)dB^H(s)\right]
=\langle K_{H,T}^*(\psi{\bf 1}_{[0,t]}),K_{H,T}^*(\phi{\bf
1}_{[0,t]})\rangle_{L^2(0,T)}.
\end{align}

\subsubsection{The case $H\in(\frac{1}{2},1)$}

For the case $H\in(\frac{1}{2},1)$, the covariance function $R_H$ of $B^H$ satisfies
\begin{align*}
R_H(t,s)=&\alpha_H\int_0^t\int_0^s|r-u|^{2H-2}dudr\\
=&\alpha_H\int_0^T\int_0^T{\bf 1}_{[0,t]}(r){\bf 1}_{[0,s]}(u)|r-u|^{2H-2}dudr
\end{align*}
with $\alpha_H=H(2H-1)$. The
square integrable kernel has form
\[
K_H(t,s)=c_H\int_s^t\left(\frac us\right)^{H-\frac12}(u-s)^{H-\frac32}du
\]
with $c_H=\left(\frac{\alpha_H}{\beta(2-2H,H-\frac12)}\right)^{\frac12}$ such that
\begin{align}\label{eq:chara}
\langle{\bf 1}_{[0,t]},{\bf 1}_{[0,s]}\rangle_{\mathcal{H}}=R_H(t,s)=&\alpha_H\int_0^T\int_0^T{\bf 1}_{[0,t]}(r){\bf 1}_{[0,s]}(u)|r-u|^{2H-2}dudr\nonumber\\
=&\int_0^T{\bf 1}_{[0,t]}(u){\bf 1}_{[0,s]}(u)K_H(t,u)K_H(s,u)du,
\end{align}
and $K_{H,T}^*$ in \eqref{eq:KH*} turns to be
\[
(K^*_{H,T}\psi)(s)=\int_s^T\psi(u)\frac{\partial K_H(u,s)}{\partial u}du.
\]
By noting that
\[
(K^*_{H,T}{\bf 1}_{[0,t]})(s)=\int_s^T{\bf 1}_{[0,t]}(u)\frac{\partial K_H(u,s)}{\partial u}du={\bf 1}_{[0,t]}(s)\int_s^t\frac{\partial K_H(u,s)}{\partial u}du={\bf 1}_{[0,t]}(s)K_H(t,s),
\]
one get
\begin{align*}
\langle{\bf 1}_{[0,t]},{\bf 1}_{[0,s]}\rangle_{\mathcal{H}}=&\int_0^T{\bf 1}_{[0,t]}(u){\bf 1}_{[0,s]}(u)K_H(t,u)K_H(s,u)du\\
=&\int_0^T(K^*_{H,T}{\bf 1}_{[0,t]})(u)(K^*_{H,T}{\bf 1}_{[0,s]})(u)du\\
=&\langle K_{H,T}^*{\bf 1}_{[0,t]},K_{H,T}^*{\bf 1}_{[0,s]}\rangle_{L^2(0,T)}.
\end{align*}

In this case, \eqref{eq:inte} can be calculated as follows
\begin{align}\label{V121}
&\mathbb{E}\left[\int_0^t\psi(s)dB^H(s)\int_0^t\phi(s)dB^H(s)\right]
\nonumber\\
=&\langle K_{H,T}^*(\psi{\bf 1}_{[0,t]}),K_{H,T}^*(\phi{\bf
1}_{[0,t]})\rangle_{L^2(0,T)}\nonumber\\
=&\langle\psi{\bf 1}_{[0,t]},\phi{\bf
1}_{[0,t]}\rangle_{\mathcal{H}}\nonumber\\
=&\alpha_H\int_0^t\int_0^t\psi(r)\phi(u)|r-u|^{2H-2}dudr
\end{align}
according to \eqref{eq:chara},
which is used in (\ref{eq:moment}).

\subsubsection{The case $H\in(0,\frac{1}{2})$}

If the trajectories of $B^H$ is less regular than the case above with $H\in(0,\frac12)$, the square integrable kernel $K_H$ has the following form instead
\begin{align}\label{KH012}
K_H(t,s)=c_H\left[\left(\frac ts\right)^{H-\frac12}(t-s)^{H-\frac12}-\left(H-\frac12\right)s^{\frac12-H}\int_s^tu^{H-\frac32}(u-s)^{H-\frac12}du\right]
\end{align}
with $c_H=\left(\frac{2H}{(1-2H)\beta(1-2H,H+\frac12)}\right)^{\frac12}$ such that
\[
R_H(t,s)=\int^{t\wedge s}_0K_H(t,u)K_H(s,u)du
\]
similar to \eqref{eq:chara}. Utilizing the fact (see \cite{Tindel+2003})
\[
[K_{H,T}^*(\psi{\bf 1}_{[0,t]})](s)=[(K_{H,t}^*\psi)(s)]{\bf
1}_{[0,t]}(s),\quad\forall~t\in[0,T],
\]
where $K_{H,t}^*$ is defined in \eqref{eq:KH*}, we may rewrite
\eqref{eq:inte} into
\begin{align}\label{V012}
\mathbb{E}\left[\int_0^t\psi(s)dB^H(s)\int_0^t\phi(s)dB^H(s)\right]
=&\langle K_{H,T}^*(\psi{\bf 1}_{[0,t]}),K_{H,T}^*(\phi{\bf
1}_{[0,t]})\rangle_{L^2(0,T)}\\
=&\langle K_{H,t}^*\psi,K_{H,t}^*\phi\rangle_{L^2(0,t)},
\end{align}
which is used in Subsection \ref{subsec3.2} and (\ref{vTe}).

\end{document}